\newtheorem{theorem}{Theorem}[section]
\newtheorem{lemma}[theorem]{Lemma}
\newtheorem{proposition}[theorem]{Proposition}
\theoremstyle{definition}
\newtheorem{definition}[theorem]{Definition}
\newtheorem{example}[theorem]{Example}
\newtheorem{remark}[theorem]{Remark}
\numberwithin{equation}{section}
\newcommand{\N}{\mathbb{N}}
\newcommand{\R}{\mathbb{R}}
\renewcommand{\le}{\ensuremath{\leqslant}}
\renewcommand{\ge}{\ensuremath{\geqslant}}
\newcommand{\abs}[1]{\lvert#1\rvert}            
\newcommand{\norm}[1]{\lVert#1\rVert}           
\newcommand{\bigabs}[1]{\bigl\lvert#1\bigr\rvert} 
\newcommand{\bignorm}[1]{\bigl\lVert#1\bigr\rVert}
\newcommand{\term}[1]{{\textit{\textbf{#1}}}}   
\newcommand{\midcolon}{\::\:} 
\newcommand{\supp}{\operatorname{supp}}
\subjclass[2010]{06F20, 
46A40
    (primary);
  46B42,
  46E05
(secondary)}
\keywords{Vector lattice, positively homogeneous continuous function calculus,
  uniform completeness}
\begin{document}
\title[Vector lattices admitting a p.h.c.\ function calculus]{Vector lattices admitting a positively homogeneous continuous function calculus}

\author[N.~J.~Laustsen]{Niels Jakob Laustsen}
\address{Department of Mathematics and Statistics, Fylde College, Lancaster University, Lancaster,
LA1 4YF, United Kingdom}
\email{n.laustsen@lancaster.ac.uk}

\author[V.~G.~Troitsky]{Vladimir G.~Troitsky}
\address{Department of Mathematical and Statistical Sciences, University of Alberta, Edmonton, Alberta T6G 2G1, Canada}
\email{troitsky@ualberta.ca}

\begin{abstract}
 We characterize the Archimedean vector lattices that admit a
 positively ho\-mo\-ge\-neous continuous function calculus by showing
 that the
 following two conditions are equivalent for each $n$-tuple $\boldsymbol{x} = (x_1,\ldots,x_n)\in X^n$,
 where~$X$ is an Archimedean vector lattice and $n\in\N$:
\begin{itemize}
\item there is a vector lattice homo\-mor\-phism
  \mbox{$\Phi_{\boldsymbol{x}}\colon H_n\to X$} such that
  \[  \Phi_{\boldsymbol{x}}(\pi_i^{(n)}) = x_i\qquad 
  (i\in\{1,\ldots,n\}), \] where~$H_n$ denotes the vector
  lattice of positively homogeneous, continuous, real-valued functions
  defined on~$\R^n$ and $\pi_i^{(n)}\colon\R^n\to\R$ is the
  $i^{\text{th}}$ coordinate projection;
\item there is a positive
  element $e\in X$ such that $e\geqslant\lvert x_1\rvert\vee\cdots\vee\lvert x_n\rvert$
  and the norm
\[  \lVert x\rVert_e = \inf\bigl\{
  \lambda\in[0,\infty)\:\colon\:\lvert x\rvert\le\lambda e\bigr\},
     \] 
defined for each $x$ in the order ideal~$I_e$ of~$X$ generated by~$e$,
is complete when restricted to the closed sub\-lattice of~$I_e$
generated by $x_1,\ldots,x_n$.
\end{itemize}
Moreover, we show that a vector space which admits a `sufficiently
strong' $H_n$-function calculus for each $n\in\N$ is automatically a
vector lattice, and we explore the situation in the
non-Archi\-me\-dean case by showing that some non-Archi\-me\-dean vector
lattices admit a positively ho\-mo\-ge\-neous continuous function
calculus, while others do not.
\end{abstract}
\maketitle
\section{Introduction and main results}
\noindent
Yudin~\cite{yudin} and Krivine~\cite{krivine} showed that every Banach
lattice admits a positively homogeneous continuous function calculus.
We refer to~\cite[Theorem~1.d.1]{LT2} for an easily accessible,
precise state\-ment of this result, which is a fundamental tool in the
study of Banach lattices, for in\-stance allowing the definition of
elements of the form $\bigl(\sum_{j=1}^n\abs{x_j}^p\bigr)^{1/p}$ for
$p\in(1,\infty)$ when\-ever $n\in\N$ and $x_1,\ldots,x_n$ belong to
some Banach lattice.  Buskes, de Pagter and van
Rooij \cite[Theorem~3.7]{BdPvR} have subsequently generalized the
theorem of Yudin and Krivine to the class of uniformly complete
Archimedean vector lattices. 

After most of the research that this note is based upon was carried
out, we learnt of a recent paper~\cite{BS} of Buskes and Schwanke in
which they study completions of Archimedean vector lattices with
respect to a given non-empty collection~$\mathcal{D}$ of
positively homogeneous continuous functions. Of particular interest in
the present context is their final result \cite[Corollary~3.18]{BS},
which states that for each such collection~$\mathcal{D}$, every
Archimedean vector lattice has a
$\mathcal{D}$\nobreakdash-com\-ple\-tion, and it is unique up to
vector lattice isomorphism.

The main aim of our work is to characterize the Archimedean vector
lattices that admit a positively homogeneous continuous function
calculus; we refer to Theorem~\ref{mainthm} for a precise statement of
this result. In combination with
Proposition~\ref{prop:twoexamples}\ref{sublatticeX}, it will in
particular show that this class is strictly larger than that of
uniformly complete Archimedean vector lattices.  In the context of the
work~\cite{BS} of Buskes and Schwanke discussed above,
Theorem~\ref{mainthm} can be viewed as providing an alternative
and perhaps more explicit description of the class of
$\mathcal{D}$\nobreakdash-com\-plete Archimedean vector lattices in
the special case where~$\mathcal{D}$ is the collection of \textsl{all}
positively homogeneous continuous functions.\smallskip

In order to state our results precisely, we must introduce some
notation and terminology.  All vector spaces and lattices are assumed
to be over the field~$\R$ of real numbers. A real-valued function~$f$
defined on a vector space~$X$ is \term{positively homogeneous} if
\mbox{$f(\lambda x) = \lambda f(x)$} for each $\lambda\in[0,\infty)$
  and each $x\in X$.  For $n\in\N$, we denote by~$H_n$ the vector
  lattice of positively homogeneous, continuous, real-valued functions
  defined on~$\R^n$. The $i^{\text{th}}$ coordinate projection \[
  \pi_i^{(n)}\colon\ (t_1,\ldots,t_n)\mapsto t_i,\quad \R^n\to\R, \]
  clearly belongs to~$H_n$ for each $i\in\{1,\ldots,n\}$.

We can now clarify what we  mean by `a positively homogeneous
continuous function calculus' for a general vector lattice.
\begin{definition}\label{defnphcfc}
  A vector lattice~$X$ admits a \term{positively homogeneous continuous
    function calculus} if, for each $n\in\N$ and each $n$-tuple $\boldsymbol{x} =
  (x_1,\ldots,x_n)\in X^n$, there is a vector lattice homo\-mor\-phism
  \mbox{$\Phi_{\boldsymbol{x}}\colon H_n\to X$} such that
  \begin{equation}\label{defnphcfcEq} \Phi_{\boldsymbol{x}}(\pi_i^{(n)}) = x_i\qquad 
  (i\in\{1,\ldots,n\}).
  \end{equation}
  In this case, we refer to the map
  $\boldsymbol{x}\mapsto\Phi_{\boldsymbol{x}}$ as a \term{positively
    homogeneous continuous function calculus} for~$X$.
\end{definition}

We shall next introduce a condition which turns out to be equivalent
to admitting a posi\-tive\-ly homogeneous continuous function
calculus. It involves the following standard notions.  For a positive
element~$e$ of a vector lattice~$X$, the set
\begin{equation}\label{OrderIdeal} I_e = \bigl\{ x\in X \midcolon\abs{x}\le\lambda
  e\ \text{for some}\ \lambda\in[0,\infty)\bigr\}
\end{equation} is the \term{order ideal generated by}~$e$, and
\begin{equation}\label{OrderIdealNorm}
  \norm{x}_e = \inf\bigl\{
  \lambda\in[0,\infty)\midcolon\abs{x}\le\lambda e\bigr\}\qquad (x\in
    I_e) \end{equation} defines a lattice seminorm on~$I_e$.  Suppose
that~$X$ is \term{Archimedean}, that is, whenever $x,y\in X^+$ satisfy
$nx\le y$ for each $n\in\N$, it follows that $x= 0$. Then
$\norm{\,\cdot\,}_e$ is a norm on~$I_e$.

\begin{definition}
Let~$X$ be an Archimedean vector lattice. Then:
\begin{itemize}
\item $X$ is \term{uniformly complete} if, for each positive element
  $e\in X$, the order ideal~$I_e$ is complete with respect to the norm
  $\norm{\,\cdot\,}_e$ given by~\eqref{OrderIdealNorm};
\item $X$ is \term{finitely uniformly complete} if, for each $n\in\N$
  and $x_1,\ldots,x_n\in X$, there is a positive element $e\in X$ such
  that $e\ge\abs{x_1}\vee\cdots\vee\abs{x_n}$ and the norm
  $\norm{\,\cdot\,}_e$ is complete on the closed sub\-lattice of
  $\bigl(I_e,\norm{\,\cdot\,}_e\bigr)$ generated by $x_1,\ldots,x_n$.
\end{itemize}
\end{definition}

The first of these two notions is standard, whereas the second appears
to be new. Clearly, the first implies the second;
Proposition~\ref{prop:twoexamples}\ref{sublatticeX} below will show
that they are not equivalent.

We are now ready to state our two main results. 

\begin{theorem}\label{mainthm}
Let~$X$ be an Archimedean vector lattice. Then~$X$ admits a positively
ho\-mo\-ge\-neous continuous function calculus if and only if~$X$ is
finitely uniformly complete.

When~$X$ is finitely uniformly complete, the positively homogeneous
continuous function calculus is unique (in the sense that for each
$n\in\N$ and $\boldsymbol{x}\in X^n$, there is only one vector lattice
homomorphism \mbox{$\Phi_{\boldsymbol{x}}\colon H_n\to X$} which
satisfies~\eqref{defnphcfcEq}), and
\begin{equation}\label{compositionsEq}
  \Phi_{(\Phi_{\boldsymbol{x}}(f_1),\ldots,\Phi_{\boldsymbol{x}}(f_m))}(g)
  = \Phi_{\boldsymbol{x}}\bigl(g\circ(f_1\times\cdots\times
  f_m)\bigr)
\end{equation} 
for each $m,n\in\N,$ $\boldsymbol{x}\in X^n$, $f_1,\ldots,f_m\in H_n$
and $g\in H_m,$ where $f_1\times\cdots\times f_m\colon
\R^n\to \R^m$ is the function
defined by
\begin{equation}\label{f1timesdotsfmDefn} 
(f_1\times\cdots\times f_m)(t) = \bigl(f_1(t),\ldots,f_m(t)\bigr)\qquad
(t\in\R^n). \end{equation} 
\end{theorem}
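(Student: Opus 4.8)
The plan is to route both implications through one circle of ideas resting on the observation that $H_n$ is really $C(S^{n-1})$ in disguise. Restricting to the Euclidean unit sphere $S^{n-1}$ identifies $H_n$ isometrically and lattice-isomorphically with $C(S^{n-1})$, the inverse sending $g$ to $t\mapsto\abs{t}_2\,g(t/\abs{t}_2)$; under this identification the Euclidean norm $u\colon t\mapsto\abs{t}_2$ is the order unit, $\norm{f}_\infty=\inf\{\lambda\geqslant 0\colon\abs{f}\leqslant\lambda u\}$, and $\bigl(H_n,\norm{\,\cdot\,}_\infty\bigr)$ is an $M$-space with unit~$u$. Two facts about the vector sublattice $M_0\subseteq H_n$ generated by $\pi_1^{(n)},\ldots,\pi_n^{(n)}$ are used repeatedly. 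First, $M_0$ is \emph{dense} in $H_n$: by the lattice form of the Stone--Weierstrass theorem it suffices to interpolate prescribed values at any two distinct points of $S^{n-1}$, which one does with a linear functional $\langle a,\,\cdot\,\rangle\in M_0$ at linearly independent points, and with $\ell^+,\ell^-$ (for a functional $\ell$ with $\ell(s)=1$) at antipodal points $s,-s$. Second, $M_0$ is \emph{freely} generated by $\pi_1^{(n)},\ldots,\pi_n^{(n)}$: this is the classical identification (Baker, Bleier) of the free vector lattice on~$n$ generators with the positively homogeneous piecewise-linear functions on $\R^n$; equivalently, a homogeneous piecewise-linear identity valid on all of $\R^n$ holds in every vector lattice, via Birkhoff's representation of a vector lattice as a subdirect product of totally ordered ones.

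I would establish \emph{uniqueness} first, since it drives the rest. If $\Phi,\Psi\colon H_n\to X$ are vector lattice homomorphisms with $\Phi(\pi_i^{(n)})=x_i=\Psi(\pi_i^{(n)})$, then $\Phi$ and $\Psi$ agree on the generated sublattice $M_0$. Given $f\in H_n$, pick $f_k\in M_0$ with $\norm{f-f_k}_\infty\to 0$; since $\abs{g}\leqslant\norm{g}_\infty u$ and each homomorphism is positive, $\abs{\Phi(f)-\Phi(f_k)}\leqslant\norm{f-f_k}_\infty\Phi(u)$ and likewise for $\Psi$. As $\Phi(f_k)=\Psi(f_k)$, the triangle inequality gives $\abs{\Phi(f)-\Psi(f)}\leqslant\norm{f-f_k}_\infty\,b$ with $b=\Phi(u)\vee\Psi(u)$; since this holds for all $k$, the Archimedean property forces $\Phi(f)=\Psi(f)$. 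Thus there is at most one homomorphism satisfying \eqref{defnphcfcEq}, for every Archimedean~$X$.

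For \emph{necessity}, suppose $X$ admits the calculus and fix $\boldsymbol{x}$. Put $e=\Phi_{\boldsymbol{x}}(u)$; since $\abs{\pi_i^{(n)}}\leqslant u$ we obtain $\abs{x_1}\vee\cdots\vee\abs{x_n}=\Phi_{\boldsymbol{x}}\bigl(\abs{\pi_1^{(n)}}\vee\cdots\vee\abs{\pi_n^{(n)}}\bigr)\leqslant e$, and $\abs{\Phi_{\boldsymbol{x}}(f)}\leqslant\norm{f}_\infty e$ shows $\Phi_{\boldsymbol{x}}$ maps $H_n$ contractively into $\bigl(I_e,\norm{\,\cdot\,}_e\bigr)$, itself an $M$-space with unit~$e$. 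The key point is that $\Phi_{\boldsymbol{x}}(H_n)$ is \emph{complete}: factoring $\Phi_{\boldsymbol{x}}$ through the quotient by its (closed) kernel yields an \emph{injective} unital vector lattice homomorphism $\overline{\Phi}_{\boldsymbol{x}}\colon H_n/\ker\Phi_{\boldsymbol{x}}\to I_e$ between $M$-spaces with units, and any such map is automatically isometric. Indeed, for such a homomorphism $T$ with units $e_E,e_F$, if $\norm{Ta}_F\leqslant\lambda$ then $T\bigl(\abs{a}\wedge\lambda e_E\bigr)=T\abs{a}\wedge\lambda e_F=T\abs{a}$, so injectivity gives $\abs{a}=\abs{a}\wedge\lambda e_E\leqslant\lambda e_E$, whence $\norm{a}_E\leqslant\lambda$. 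As $H_n/\ker\Phi_{\boldsymbol{x}}$ is a Banach lattice, its isometric image $\Phi_{\boldsymbol{x}}(H_n)$ is complete; and since $\Phi_{\boldsymbol{x}}(M_0)$ is the sublattice generated by $x_1,\ldots,x_n$ and is dense in $\Phi_{\boldsymbol{x}}(H_n)$, the latter is exactly the closed sublattice generated by $x_1,\ldots,x_n$. Hence $\norm{\,\cdot\,}_e$ is complete there, i.e.\ $X$ is finitely uniformly complete.

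For \emph{sufficiency}, choose $e\geqslant\abs{x_1}\vee\cdots\vee\abs{x_n}$ with $\norm{\,\cdot\,}_e$ complete on the closed sublattice $L$ generated by the $x_i$. By freeness of $M_0$ there is a unique vector lattice homomorphism $\Phi_0\colon M_0\to X$ with $\Phi_0(\pi_i^{(n)})=x_i$; its image $L_0$ lies in $I_e$, and using $\Phi_0\bigl(\abs{\pi_1^{(n)}}\vee\cdots\vee\abs{\pi_n^{(n)}}\bigr)=\abs{x_1}\vee\cdots\vee\abs{x_n}\leqslant e$ together with $u\leqslant\sqrt{n}\,\bigl(\abs{\pi_1^{(n)}}\vee\cdots\vee\abs{\pi_n^{(n)}}\bigr)$ one checks $\norm{\Phi_0(f)}_e\leqslant\sqrt{n}\,\norm{f}_\infty$. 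Thus $\Phi_0$ is uniformly continuous from the dense subspace $\bigl(M_0,\norm{\,\cdot\,}_\infty\bigr)$ into the complete space $\bigl(L,\norm{\,\cdot\,}_e\bigr)$, so it extends uniquely to a continuous linear $\Phi_{\boldsymbol{x}}\colon H_n\to L\subseteq X$; continuity of the lattice operations makes $\Phi_{\boldsymbol{x}}$ a vector lattice homomorphism with $\Phi_{\boldsymbol{x}}(\pi_i^{(n)})=x_i$. Finally, \eqref{compositionsEq} is immediate from uniqueness: the composition operator $C_F\colon g\mapsto g\circ(f_1\times\cdots\times f_m)$ maps $H_m$ into $H_n$ and is a vector lattice homomorphism, so $\Phi_{\boldsymbol{x}}\circ C_F\colon H_m\to X$ is a homomorphism sending $\pi_j^{(m)}$ to $\Phi_{\boldsymbol{x}}(f_j)$, and by uniqueness it must equal $\Phi_{(\Phi_{\boldsymbol{x}}(f_1),\ldots,\Phi_{\boldsymbol{x}}(f_m))}$. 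I expect the crux to be the necessity direction, specifically the completeness of $\Phi_{\boldsymbol{x}}(H_n)$, where the $M$-space-with-unit viewpoint and the isometry of injective unital homomorphisms do the real work; the freeness of $M_0$ is the essential external input making the sufficiency construction legitimate.
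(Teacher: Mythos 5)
Your proposal is correct, and although its overall architecture (uniqueness via density of the sublattice generated by the coordinate projections, necessity via an order unit together with completeness of the image, sufficiency via a Krivine-type construction, and \eqref{compositionsEq} deduced from uniqueness --- this last step being essentially identical to the paper's) matches the paper, three of its components take a genuinely different route. First, you restrict to the Euclidean sphere rather than to $S_{\ell_\infty^n}$: the paper's choice has the advantage that $\abs{\pi_1^{(n)}}\vee\cdots\vee\abs{\pi_n^{(n)}}$ restricts to the constant function $\boldsymbol{1}$, so the generated sublattice contains the constants (plain Stone--Weierstrass applies) and the unit maps exactly to $e=\abs{x_1}\vee\cdots\vee\abs{x_n}$, giving the sharper condition \ref{PropStep1ofMainThm2} of Proposition~\ref{PropStep1ofMainThm}; your choice forces the two-point-interpolation (Kakutani) form of Stone--Weierstrass --- which you handle correctly, including the antipodal case via $\ell^+,\ell^-$ --- and produces the possibly larger $e=\Phi_{\boldsymbol{x}}(u)$, which still suffices for finite uniform completeness as defined. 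Second, for the crux --- completeness of $\Phi_{\boldsymbol{x}}[H_n]$ --- the paper runs an absolutely-convergent-series argument with the truncation $g_n=f_n\wedge\norm{x_n}_e\boldsymbol{1}$ (Lemma~\ref{phcfcImpliesFLC}), whereas you factor through the quotient by the kernel and use that an injective unital lattice homomorphism between order-unit-normed lattices is isometric; this works, but you should spell out (i) why the kernel is closed --- it is the preimage of $\{0\}$ under the contractive map into $\bigl(I_e,\norm{\,\cdot\,}_e\bigr)$, whose seminorm is a genuine norm precisely because $X$ is Archimedean --- and (ii) the standard fact that the quotient of an AM-space with unit by a closed ideal is again an AM-space whose norm is the order-unit norm of the image of the unit, which your isometry lemma needs in order to apply to the quotient norm. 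Third, for sufficiency the paper simply cites the Yudin/Krivine theorem for the Banach lattice $\bigl(X_{\boldsymbol{x}},\norm{\,\cdot\,}_e\bigr)$, while you in effect reprove it: the freeness of the sublattice generated by the projections (Baker--Bleier; equivalently, that $\R$ generates the variety of vector lattices) is the external input making $\Phi_0$ well defined, after which the continuous extension is routine. Net effect: your argument is more self-contained (no black-box Krivine) at the price of importing the free-vector-lattice identification and AM-space quotient machinery. Two cosmetic slips, neither affecting correctness: in the uniqueness estimate the bound should be $2\norm{f-f_k}_\infty\,b$ (or take $b=\Phi(u)+\Psi(u)$), and in the isometry lemma the passage from $\norm{Ta}_F\le\lambda$ to $\abs{Ta}\le\lambda e_F$ uses the Archimedean property of the order-unit space (or should be run with $\lambda+\varepsilon$).
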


Writing $f(\boldsymbol{x})$ for $\Phi_{\boldsymbol{x}}(f)$,
\eqref{compositionsEq} takes the more suggestive form
\begin{equation}\label{compEqAlt} g\bigl(f_1(\boldsymbol{x}),\ldots, f_m(\boldsymbol{x})\bigr) =
\bigl(g\circ(f_1\times\cdots\times f_m)\bigr)(\boldsymbol{x}). \end{equation}

Our other main result states that a \textsl{vector space} which admits
a `sufficiently strong' function calculus is automatically a
\textsl{vector lattice} with a positively homogeneous continuous
function calculus.

\begin{theorem}\label{mainthm2}
  Let $X$ be a vector space and suppose that, for each $n\in\N$ and
  each $n$-tuple $\boldsymbol{x}\in X^n,$ there is a linear map
  $\Phi_{\boldsymbol{x}}\colon H_n\to X$ which satisfies
  conditions~\eqref{defnphcfcEq} and~\eqref{compositionsEq}. Then~$X$
  admits the structure of a vector lattice, and
  $\boldsymbol{x}\mapsto\Phi_{\boldsymbol{x}}$ is a positively
  homogeneous con\-tinuous function calculus for~$X$ with respect to
  this lattice structure.  Moreover, $X$ is Ar\-chi\-me\-dean if and only if
  $\ker\Phi_{\boldsymbol{x}}$ is closed in~$H_n$ for each $n\in\N$ and
  $\boldsymbol{x}\in X^n,$ where~$H_n$ is given the topology obtained
  by identifying it with~$C(S_{\ell_\infty^n})$
  (see~\eqref{HnequalsCSlinftyn} for details of this iden\-ti\-fi\-ca\-tion).
\end{theorem}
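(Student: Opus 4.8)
The plan is to use the composition rule~\eqref{compositionsEq} as the sole engine of the argument: it lets us rewrite any expression built from finitely many elements of~$X$ as $\Phi_{\boldsymbol{x}}$ applied to a single function of one fixed tuple~$\boldsymbol{x}$, whereupon every desired identity in~$X$ reduces to a \emph{pointwise} identity between positively homogeneous continuous functions on~$\R^n$ (which holds because~$\R$ is itself a vector lattice). Concretely, I would first define the candidate operations by
\[
  x\vee y := \Phi_{(x,y)}(\vee_2),\qquad x\wedge y := \Phi_{(x,y)}(\wedge_2),\qquad \abs{x} := \Phi_{(x)}(\abs{\,\cdot\,}),
\]
where $\vee_2,\wedge_2\in H_2$ are $(s,t)\mapsto\max(s,t)$ and $(s,t)\mapsto\min(s,t)$, and $\abs{\,\cdot\,}\in H_1$ is $t\mapsto\abs{t}$. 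These are genuine elements of~$X$; the content lies in checking the axioms.

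The one consequence of~\eqref{compositionsEq} I need is its specialization in which the~$f_j$ are coordinate projections, possibly combined through the vector operations by linearity of~$\Phi_{\boldsymbol{x}}$: for $\boldsymbol{x}=(x_1,\dots,x_n)$ and $g\in H_m$, if each $y_j=\Phi_{\boldsymbol{x}}(f_j)$ with $f_j\in H_n$, then $\Phi_{(y_1,\dots,y_m)}(g)=\Phi_{\boldsymbol{x}}\bigl(g\circ(f_1\times\cdots\times f_m)\bigr)$. Taking $\boldsymbol{x}$ to be a tuple listing all elements under consideration, I would verify in turn that $\vee$ is commutative, associative and idempotent (so $(X,\vee)$ is a semilattice and $x\le y:\Leftrightarrow x\vee y=y$ is a partial order with supremum~$\vee$), that~$\wedge$ gives the corresponding infimum via the absorption identities, and that the order is translation-invariant and positively homogeneous. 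Each of these is an instance of a single pointwise identity on~$\R^n$, for example $\max(s+r,t+r)=\max(s,t)+r$ for translation invariance and $\max(\lambda s,\lambda t)=\lambda\max(s,t)$ for $\lambda\ge0$. Together with the assumed linearity of each~$\Phi_{\boldsymbol{x}}$, this makes $(X,\le)$ a vector lattice.

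Next I would show that each~$\Phi_{\boldsymbol{x}}$ is a vector lattice homomorphism for this structure. Linearity is assumed, and for $f,g\in H_n$ the definition of~$\vee$ in~$X$ followed by~\eqref{compositionsEq} gives
\[
  \Phi_{\boldsymbol{x}}(f)\vee\Phi_{\boldsymbol{x}}(g)=\Phi_{(\Phi_{\boldsymbol{x}}(f),\Phi_{\boldsymbol{x}}(g))}(\vee_2)=\Phi_{\boldsymbol{x}}\bigl(\vee_2\circ(f\times g)\bigr)=\Phi_{\boldsymbol{x}}(f\vee g),
\]
since $\vee_2\circ(f\times g)$ is precisely the pointwise maximum $f\vee g$ in~$H_n$; preservation of~$\wedge$ (equivalently of the modulus) is identical. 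As~\eqref{defnphcfcEq} holds by hypothesis, the map $\boldsymbol{x}\mapsto\Phi_{\boldsymbol{x}}$ is a positively homogeneous continuous function calculus for~$X$.

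Finally, for the Archimedean dichotomy I would identify~$H_n$ with $C(S_{\ell_\infty^n})$ as in~\eqref{HnequalsCSlinftyn}. Since~$\Phi_{\boldsymbol{x}}$ is now a lattice homomorphism, $J_{\boldsymbol{x}}:=\ker\Phi_{\boldsymbol{x}}$ is an order ideal and~$\Phi_{\boldsymbol{x}}$ factors as a vector lattice isomorphism $H_n/J_{\boldsymbol{x}}\cong\Phi_{\boldsymbol{x}}(H_n)$, the image being a sublattice of~$X$ whose order is the restriction of that of~$X$. The key input is the classical fact that, for an order ideal~$J$ of a space~$C(K)$, the quotient $C(K)/J$ is Archimedean if and only if~$J$ is norm closed (equivalently, relatively uniformly closed). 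Granting this: if every~$J_{\boldsymbol{x}}$ is closed, then every $\Phi_{\boldsymbol{x}}(H_n)$ is Archimedean, and given $u,v\in X^+$ with $nu\le v$ for all~$n$, the elements $u,v$ and all~$nu$ lie in the Archimedean sublattice $\Phi_{(u,v)}(H_2)$, forcing $u=0$, so~$X$ is Archimedean; conversely, if~$X$ is Archimedean then so is each sublattice $\Phi_{\boldsymbol{x}}(H_n)$, hence each $H_n/J_{\boldsymbol{x}}$, whence each~$J_{\boldsymbol{x}}$ is closed. I expect this last equivalence — the bridge between closedness of the kernel in $C(S_{\ell_\infty^n})$ and the Archimedean property of the quotient — to be the main obstacle, the verification of the lattice axioms in the earlier steps being routine once the reduction to pointwise identities through~\eqref{compositionsEq} is in place.
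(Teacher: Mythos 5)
Your proposal is correct, and for the construction of the lattice structure it is essentially the paper's proof: the paper also defines $x_1\le x_2$ via $\sigma(x_1,x_2)=x_2$ with $\sigma=\vee_2\in H_2$, reduces reflexivity, antisymmetry, transitivity, translation invariance and positive homogeneity to pointwise identities on $\R^n$ transported through~\eqref{compositionsEq} and~\eqref{defnphcfcEq}, and verifies that each $\Phi_{\boldsymbol{x}}$ preserves suprema exactly as you do (incidentally, your separate treatment of $\wedge$ via absorption is unnecessary: once every pair has a supremum in an ordered vector space, infima come for free from $x\wedge y=-\bigl((-x)\vee(-y)\bigr)$). The only genuine divergence is in the Archimedean criterion. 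The paper proves the direction ``some kernel non-closed $\Rightarrow$ $X$ non-Archimedean'' directly: using the elementary lemma that a sublattice $Y$ with $\overline{Y}\cap X^+\subseteq Y$ is closed, it picks $f\in H_n^+\cap\overline{\ker\Phi_{\boldsymbol{x}}}\setminus\ker\Phi_{\boldsymbol{x}}$, approximates $f$ by $f_m\in\ker\Phi_{\boldsymbol{x}}$ with $\norm{f-f_m}_\infty\le 1/m$, and reads off $m\,\Phi_{\boldsymbol{x}}(f)\le\Phi_{\boldsymbol{x}}(\boldsymbol{1})$ for all $m$ with $\Phi_{\boldsymbol{x}}(f)>0$; for the other direction it argues, as you do, via the Fundamental Isomorphism Theorem that a closed kernel makes $H_n/\ker\Phi_{\boldsymbol{x}}$ a Banach lattice, hence Archimedean. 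You instead delegate the first direction to the classical equivalence ``$C(K)/J$ Archimedean $\Leftrightarrow$ $J$ norm closed'' for an order ideal $J$. That fact is indeed standard (it is the $C(K)$ instance of the Luxemburg--Zaanen theorem that a quotient by an ideal is Archimedean precisely when the ideal is uniformly closed, norm closure and uniform closure coinciding in the presence of the strong unit $\boldsymbol{1}$), so there is no gap; but note that its proof in the direction you need is exactly the paper's $m(f-f_m)\le\boldsymbol{1}$ computation transplanted to the quotient, so what your citation buys is brevity rather than a new idea, at the cost of making the ``main obstacle'' you flag invisible rather than resolved on the page. One small point to make explicit if you write this up: factoring $\Phi_{\boldsymbol{x}}$ through $H_n/\ker\Phi_{\boldsymbol{x}}$ requires knowing that $\ker\Phi_{\boldsymbol{x}}$ is an order ideal, which is available only after the second step has established that $\Phi_{\boldsymbol{x}}$ is a lattice homomorphism for the newly built order.
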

The notion of (finite) uniform completeness does not extend easily to
the non-Ar\-chi\-medean setting. Indeed, if $x,y\in X^+\setminus\{0\}$
satisfy $nx\leqslant y$ for each $n\in\N$, then for any $e\in X^+$
such that $e\geqslant x\vee y$, we have $\norm{x}_e = 0$ because
$nx\leqslant y\leqslant e$ for each $n\in\N$. Hence $\norm{\,\cdot\,}_e$
is only a semi\-norm (even when it is restricted to the sub\-lattice generated
by~$x$ and~$y$). 

This raises the question whether a non-Archimedean vector lattice may
admit a positively homogeneous continuous function calculus. We shall
address it in Section~\ref{section2}, where on the one hand 
Proposition~\ref{prop:twoexamples}\ref{orderidealJ} will show that
certain non-Archimedean vector lattices do ad\-mit a positively
homogeneous continuous function calculus which
satisfies~\eqref{compositionsEq}, while on the other
Example~\ref{ex:R2lexHasNoPhcfc} will exhibit a non-Archimedean vector
lattice that does not admit any positively homogeneous continuous
function calculus.  We observe that the former of these two results
implies that the condition in the final clause of
Theorem~\ref{mainthm2} is not always satisfied.

\section{Proofs of Theorems~\ref{mainthm} and~\ref{mainthm2}}
\noindent
For a topological space~$K$, we write $C(K)$ for the vector lattice of
continuous, real-valued functions defined on~$K$. This is of course a
Banach lattice with respect to the supremum
norm~$\norm{\,\cdot\,}_\infty$ when~$K$ is a compact Hausdorff space. We use the symbol~$\boldsymbol{1}$ to denote the constant
function~$1$ defined on~$K$.

\begin{lemma}\label{phcfcImpliesFLC}
Let $T\colon C(K)\to X$ be a vector lattice homomorphism, where~$K$ is
a compact Haus\-dorff space and~$X$ is an Archimedean vector lattice,
and set $e = T(\boldsymbol{1})\in X^+$. Then $T[C(K)]\subseteq I_e,$
$T$ is continuous with operator norm at most~$1$ when regarded as a
map into~$\bigl(I_e,\norm{\,\cdot\,}_e\bigr),$ and the restriction of
the norm~$\norm{\,\cdot\,}_e$ to~$T[(C(K)]$ is complete.
\end{lemma}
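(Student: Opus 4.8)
The plan is to dispose of the first two assertions immediately from the lattice-homomorphism property, and then to reduce the completeness statement to the completeness of a quotient of~$C(K)$.

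First I would note that for every $f\in C(K)$ the compactness of~$K$ gives $\abs{f}\le\norm{f}_\infty\boldsymbol{1}$ in~$C(K)$. Since a vector lattice homomorphism is positive and satisfies $T(\abs{f}) = \abs{T(f)}$, applying~$T$ yields
\[ \abs{T(f)} = T(\abs{f})\le\norm{f}_\infty\, T(\boldsymbol{1}) = \norm{f}_\infty e. \]
This simultaneously shows that $T(f)\in I_e$, so that $T[C(K)]\subseteq I_e$, and that $\norm{T(f)}_e\le\norm{f}_\infty$, which is the operator-norm bound.

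The substance of the lemma is the completeness of $\bigl(T[C(K)],\norm{\,\cdot\,}_e\bigr)$, and my plan is to establish the sharper identity
\[ \norm{T(f)}_e = \operatorname{dist}(f,\ker T) = \inf_{g\in\ker T}\norm{f-g}_\infty\qquad(f\in C(K)). \]
The inequality ``$\le$'' is immediate: for $g\in\ker T$ we have $T(f-g)=T(f)$, so the operator-norm bound gives $\norm{T(f)}_e=\norm{T(f-g)}_e\le\norm{f-g}_\infty$, and taking the infimum over~$g$ finishes it. For ``$\ge$'', fix $\lambda>\norm{T(f)}_e$, so that $\abs{T(f)}\le\lambda e$, and truncate by setting $f_\lambda=\bigl(f\wedge\lambda\boldsymbol{1}\bigr)\vee(-\lambda\boldsymbol{1})$. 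Then $\norm{f_\lambda}_\infty\le\lambda$ and $\abs{f-f_\lambda}=\bigl(\abs{f}-\lambda\boldsymbol{1}\bigr)^+$, so, using that~$T$ preserves positive parts,
\[ \abs{T(f-f_\lambda)} = T\bigl((\abs{f}-\lambda\boldsymbol{1})^+\bigr) = \bigl(\abs{T(f)}-\lambda e\bigr)^+ = 0. \]
Hence $f-f_\lambda\in\ker T$ and $\operatorname{dist}(f,\ker T)\le\norm{f_\lambda}_\infty\le\lambda$; letting $\lambda\downarrow\norm{T(f)}_e$ completes the identity. I expect this truncation step---manufacturing an explicit kernel element that witnesses the distance---to be the main obstacle, as it is the one place where the lattice structure of~$C(K)$ is used in an essential way.

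With the identity established the rest is routine. In particular it shows that $\ker T$ is $\norm{\,\cdot\,}_\infty$-closed, since $\operatorname{dist}(f,\ker T)=0$ forces $\norm{T(f)}_e=0$ and hence $T(f)=0$. Therefore $C(K)/\ker T$ is a Banach space whose quotient norm is precisely $f+\ker T\mapsto\operatorname{dist}(f,\ker T)$, and the induced linear map $\widetilde{T}\colon C(K)/\ker T\to I_e$ is a well-defined isometry onto $T[C(K)]$. Being the isometric image of a complete space, $T[C(K)]$ is complete with respect to $\norm{\,\cdot\,}_e$, as required.
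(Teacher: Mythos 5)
Your proof is correct, but it takes a genuinely different route from the paper's. The paper verifies completeness of $\bigl(T[C(K)],\norm{\,\cdot\,}_e\bigr)$ directly via the absolutely-convergent-series criterion: given a series $\sum_n x_n$ of positive terms with $\sum_n\norm{x_n}_e<\infty$, it lifts each $x_n$ to $f_n\in C(K)^+$, truncates $g_n=f_n\wedge\norm{x_n}_e\boldsymbol{1}$ so that $\sum_n g_n$ converges in $C(K)$, checks $Tg_n=x_n$, and pushes the sum forward by continuity. You instead prove the sharper quantitative identity $\norm{T(f)}_e=\operatorname{dist}(f,\ker T)$, which exhibits $T[C(K)]$ as the isometric (indeed lattice-isometric) image of the quotient Banach lattice $C(K)/\ker T$; completeness then follows formally. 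Both arguments turn on the same essential device --- truncation at a multiple of $\boldsymbol{1}$, exploited via $T(h^+)=T(h)^+$ and $T(\abs{h})=\abs{T(h)}$ --- but yours buys more: it identifies the image isometrically, and it shows as a by-product that $\ker T$ is $\norm{\,\cdot\,}_\infty$-closed whenever $X$ is Archimedean, which is precisely (the contrapositive of) the closedness-of-kernels criterion the paper establishes separately in the final part of the proof of Theorem~1.3. Two small points worth making explicit: the step ``$\norm{T(f)}_e=0$ hence $T(f)=0$'' is exactly where the Archimedean hypothesis enters (it guarantees $\norm{\,\cdot\,}_e$ is a norm rather than a seminorm on $I_e$); and in the ``$\geqslant$'' direction, taking $\lambda$ strictly greater than $\norm{T(f)}_e$ and then letting $\lambda\downarrow\norm{T(f)}_e$ neatly sidesteps the question of whether the infimum defining $\norm{T(f)}_e$ is attained, so no further justification is needed there. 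The paper's proof is shorter and more self-contained; yours is more informative and would generalize verbatim to give the isometric isomorphism $T[C(K)]\cong C(K)/\ker T$ used implicitly elsewhere.
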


\begin{proof}
  Each $f\in C(K)$ satisfies $\abs{f}\le
  \norm{f}_\infty\boldsymbol{1}$, so as~$T$ is a vector lattice
  homomorphism, we have
  \begin{equation}\label{phcfcImpliesFLCeq2} 
    \norm{f}_\infty e = T\bigl(\norm{f}_\infty\boldsymbol{1}\bigr)
    \ge T\bigl(\abs{f}\bigr) =\bigabs{T(f)}.
  \end{equation}
  This shows that $T(f)\in I_e$ with
  $\bignorm{T(f)}_e\le\norm{f}_\infty$, so that $T[C(K)]\subseteq I_e$
  and~$T$ is continuous with operator norm at most~$1$ when regarded
  as a map into~$\bigl(I_e,\norm{\,\cdot\,}_e\bigr)$.

  We shall verify that the restriction of the
  norm~$\norm{\,\cdot\,}_e$ to~$T[(C(K)]$ is complete by showing that
  each absolutely convergent series $\sum_{n=1}^\infty x_n$ in
  $\bigl(T[(C(K)],\norm{\,\cdot\,}_e\bigr)$ converges.  It suffices to
  consider the case where~$x_n$ is positive for each $n\in\N$ because
  $\norm{\,\cdot\,}_e$ is a lattice norm. Then we can take $f_n\in
  C(K)^+$ such that $Tf_n=x_n$. Set $g_n=f_n\wedge\norm{x_n}_e\boldsymbol{1}$
  and observe that $\norm{g_n}_\infty\le\norm{x_n}_e$, so that the
  series $\sum_{n=1}^\infty g_n$ is absolutely convergent and
  therefore convergent in $C(K)$; denote its sum by~$g$. We see that
  $Tg_n=x_n\wedge\norm{x_n}_ee=x_n$ because
  $x_n\le\norm{x_n}_ee$. Since~$T$ is continuous and linear, we conclude
  that the series $\sum_{n=1}^\infty x_n$ converges to~$Tg$ in
  $\bigl(T[C(K)],\norm{\,\cdot\,}_e\bigr)$, and the result follows.
\end{proof}  

For $n\in\N$,  the unit sphere 
\[ S_{\ell_\infty^n} = \bigl\{ (t_1,\ldots,t_n)\in\R^n\midcolon \max_{1\le j\le
  n}\abs{t_j} = 1\bigr\} \] of the Banach space~$\ell_\infty^n$ is a compact
metric space with respect to the metric~$d$ induced by the norm, that is,
\[ d\bigl((s_1,\ldots,s_n),(t_1,\ldots,t_n)\bigr) = \max_{1\le j\le
  n}\abs{s_j-t_j}. \]
It is well known and easy to see that
the restriction map
\begin{equation}\label{HnequalsCSlinftyn}
  f\mapsto f\!\!\upharpoonright_{S_{\ell_\infty^n}}
\end{equation}
is a vector lattice isomorphism of~$H_n$ onto~$ C(S_{\ell_\infty^n})$,
where we recall that~$H_n$ denotes the sub\-lattice of~$C(\R^n)$ of
positively homogeneous functions. Hence we may identify~$H_n$ with the
Banach lattice~$C(S_{\ell_\infty^n})$. Although we do not require this
result, we remark that de Pagter and
Wickstead~\cite[Proposition~5.3]{dePW} have shown that this Banach
lattice is isomorphic to the free Banach lattice on~$n$ generators.

The following result can be viewed as a generalization of 
\cite[Theorem~3.7]{BdPvR}. 

\begin{proposition}\label{PropStep1ofMainThm}
Let~$X$ be an Archimedean vector lattice, and let 
$\boldsymbol{x}=(x_1,\ldots,x_n)\in X^n$ for some $n\in\N$. 
Then the following three conditions are equivalent: 
\begin{enumerate}[label={\normalfont{(\alph*)}}]
\item\label{PropStep1ofMainThm1} there is a vector lattice
  homo\-mor\-phism \mbox{$\Phi_{\boldsymbol{x}}\colon H_n\to X$} which
  satisfies~\eqref{defnphcfcEq}{\normalfont{;}}
\item\label{PropStep1ofMainThm2} the norm $\norm{\,\cdot\,}_e$ is
  complete on the closed sub\-lattice
  of~$\bigl(I_e,\norm{\,\cdot\,}_e\bigr)$ generated by
  the elements $x_1,\ldots,x_n,$ where $e = \abs{x_1}\vee\cdots\vee\abs{x_n};$
\item\label{PropStep1ofMainThm3} there is a positive element $e\in X$
  such that $e\ge\abs{x_1}\vee\cdots\vee\abs{x_n}$ and the norm
  $\norm{\,\cdot\,}_e$ is complete on the closed sub\-lattice
  of~$\bigl(I_e,\norm{\,\cdot\,}_e\bigr)$ generated by
  $x_1,\ldots,x_n$.
\end{enumerate}
When one and hence all three of these conditions are satisfied, the
vector lattice homo\-mor\-phism \mbox{$\Phi_{\boldsymbol{x}}\colon
  H_n\to X$} satisfying~\eqref{defnphcfcEq} is unique.
\end{proposition}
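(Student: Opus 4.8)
The plan is to work throughout with the identification of $H_n$ with the Banach lattice $C(K)$, where $K = S_{\ell_\infty^n}$, furnished by the restriction map~\eqref{HnequalsCSlinftyn}; write $\hat\pi_i$ for the restriction of $\pi_i^{(n)}$ to $K$. The single most useful observation is that the constant function $\boldsymbol 1$ on $K$ corresponds to $\abs{\pi_1^{(n)}}\vee\cdots\vee\abs{\pi_n^{(n)}}$ in $H_n$, because $\max_{1\le j\le n}\abs{t_j} = 1$ for every $t\in K$. Let $A$ denote the vector sublattice of $C(K)$ generated by $\hat\pi_1,\ldots,\hat\pi_n$; it then contains $\boldsymbol 1 = \abs{\hat\pi_1}\vee\cdots\vee\abs{\hat\pi_n}$, hence all constants, and it separates the points of $K$, so by the lattice version of the Stone--Weierstrass theorem $A$ is dense in $C(K)$. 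Throughout I write $e_0 = \abs{x_1}\vee\cdots\vee\abs{x_n}$.

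For the implication \ref{PropStep1ofMainThm1}$\Rightarrow$\ref{PropStep1ofMainThm2}, I would feed $T = \Phi_{\boldsymbol{x}}$ into Lemma~\ref{phcfcImpliesFLC}. As $\Phi_{\boldsymbol{x}}$ is a lattice homomorphism, the observation above gives $\Phi_{\boldsymbol{x}}(\boldsymbol 1) = \abs{x_1}\vee\cdots\vee\abs{x_n} = e_0$, so the lemma yields that $\Phi_{\boldsymbol{x}}[C(K)]\subseteq I_{e_0}$, that $\Phi_{\boldsymbol{x}}$ is continuous into $\bigl(I_{e_0},\norm{\,\cdot\,}_{e_0}\bigr)$, and that $\norm{\,\cdot\,}_{e_0}$ is complete on $\Phi_{\boldsymbol{x}}[C(K)]$. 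Since $\Phi_{\boldsymbol{x}}$ is a continuous lattice homomorphism and $A$ is dense, $\Phi_{\boldsymbol{x}}[A]$---which is exactly the sublattice of $I_{e_0}$ generated by $x_1,\ldots,x_n$---is dense in the complete, hence closed, sublattice $\Phi_{\boldsymbol{x}}[C(K)]$; thus $\Phi_{\boldsymbol{x}}[C(K)]$ is the closed sublattice generated by $x_1,\ldots,x_n$, and~\ref{PropStep1ofMainThm2} follows. The implication \ref{PropStep1ofMainThm2}$\Rightarrow$\ref{PropStep1ofMainThm3} is immediate on taking $e = e_0$.

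The substantive direction is \ref{PropStep1ofMainThm3}$\Rightarrow$\ref{PropStep1ofMainThm1}. Let $L$ be the closed sublattice of $\bigl(I_e,\norm{\,\cdot\,}_e\bigr)$ generated by $x_1,\ldots,x_n$, assumed complete. First I would produce a vector lattice homomorphism $\psi\colon A\to X$ with $\psi(\hat\pi_i) = x_i$; this is the universal property of the free vector lattice on $n$ generators, which by the Baker--Beynon theorem is realized concretely by $A$ (the sublattice of positively homogeneous piecewise-linear functions generated by the coordinate projections). Its image $\psi[A]$ is the sublattice of $X$ generated by $x_1,\ldots,x_n$, so $\psi[A]\subseteq L\subseteq I_e$. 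Next, since $\psi(\boldsymbol 1) = \abs{x_1}\vee\cdots\vee\abs{x_n}\le e$ and $\abs{f}\le\norm{f}_\infty\boldsymbol 1$, positivity of $\psi$ gives $\abs{\psi(f)} = \psi\bigl(\abs{f}\bigr)\le\norm{f}_\infty e$, i.e.\ $\norm{\psi(f)}_e\le\norm{f}_\infty$ for every $f\in A$. Thus $\psi$ is a bounded linear map from the dense subspace $A$ into the complete space $L$, so it extends uniquely to a continuous linear map $\Phi_{\boldsymbol{x}}\colon C(K)\to L$. Finally I would check that $\Phi_{\boldsymbol{x}}$ preserves the lattice operations: for $f,g\in C(K)$ choose $f_k,g_k\in A$ with $f_k\to f$ and $g_k\to g$ uniformly; continuity of $\vee$ in both $C(K)$ and $\bigl(L,\norm{\,\cdot\,}_e\bigr)$, together with $\psi(f_k\vee g_k) = \psi(f_k)\vee\psi(g_k)$, forces $\Phi_{\boldsymbol{x}}(f\vee g) = \Phi_{\boldsymbol{x}}(f)\vee\Phi_{\boldsymbol{x}}(g)$. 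As $\Phi_{\boldsymbol{x}}(\hat\pi_i) = \psi(\hat\pi_i) = x_i$, condition~\eqref{defnphcfcEq} holds and~\ref{PropStep1ofMainThm1} is established.

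For uniqueness, suppose $\Phi_{\boldsymbol{x}}$ and $\Phi_{\boldsymbol{x}}'$ both satisfy~\eqref{defnphcfcEq}. As lattice homomorphisms agreeing on the generators $\hat\pi_1,\ldots,\hat\pi_n$, they coincide on $A$; moreover each sends $\boldsymbol 1$ to $e_0$, so by Lemma~\ref{phcfcImpliesFLC} both are continuous into the normed, in particular Hausdorff, space $\bigl(I_{e_0},\norm{\,\cdot\,}_{e_0}\bigr)$, and two continuous maps agreeing on the dense set $A$ agree everywhere. The part I expect to require the most care is \ref{PropStep1ofMainThm3}$\Rightarrow$\ref{PropStep1ofMainThm1}: both the clean construction of $\psi$ on $A$, where the free-vector-lattice input is essential since a naive definition on lattice-polynomial expressions in the $\hat\pi_i$ must be shown well defined, and the verification that the density--completeness extension keeps $\Phi_{\boldsymbol{x}}$ a lattice homomorphism landing inside $X$. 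It is precisely the completeness of $L$ hypothesised in~\ref{PropStep1ofMainThm3} that prevents the extension from escaping into a mere completion of $L$.
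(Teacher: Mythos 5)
Your proof is correct, and most of its skeleton coincides with the paper's: for \ref{PropStep1ofMainThm1}$\Rightarrow$\ref{PropStep1ofMainThm2} both arguments evaluate $\Phi_{\boldsymbol{x}}$ on the identity $\abs{\pi_1^{(n)}}\vee\cdots\vee\abs{\pi_n^{(n)}}=\boldsymbol{1}$ on $S_{\ell_\infty^n}$ to get $\Phi_{\boldsymbol{x}}(\boldsymbol{1})=e_0$ and then invoke Lemma~\ref{phcfcImpliesFLC}, and for uniqueness both combine Stone--Weierstrass density of the sublattice generated by the restricted projections with the continuity that the same lemma provides (your explicit remark that Hausdorffness of the norm needs $X$ Archimedean is a point the paper leaves implicit). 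The genuine divergence is in \ref{PropStep1ofMainThm3}$\Rightarrow$\ref{PropStep1ofMainThm1}: the paper observes that the closed sublattice $X_{\boldsymbol{x}}$ is a Banach lattice under $\norm{\,\cdot\,}_e$ and simply cites the Yudin--Krivine theorem as stated in \cite[Theorem~1.d.1]{LT2}, whereas you re-prove that theorem in situ --- obtaining $\psi$ on the sublattice $A$ generated by the projections from the universal property of the free vector lattice in its Baker--Beynon realization, bounding it via $\psi(\boldsymbol{1})\le e$ so that $\norm{\psi(f)}_e\le\norm{f}_\infty$, extending by density into the complete lattice $L$, and checking that the lattice operations survive the limit (valid, since the inequality $\abs{a\vee b-a'\vee b'}\le\abs{a-a'}+\abs{b-b'}$ makes $\vee$ uniformly continuous for any lattice norm). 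What each route buys: the paper's citation is shorter and makes the proposition visibly a generalization of the quoted result; yours is self-contained modulo the freeness of $A$ --- itself a nontrivial theorem, and your extension argument is essentially the standard proof of Yudin--Krivine, so the two are close cousins rather than strangers. A small bonus of your version of \ref{PropStep1ofMainThm1}$\Rightarrow$\ref{PropStep1ofMainThm2}: you identify $\Phi_{\boldsymbol{x}}[C(S_{\ell_\infty^n})]$ as \emph{exactly} the closed sublattice generated by $x_1,\ldots,x_n$, whereas the paper only needs that this closed sublattice is contained in the complete image.
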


\begin{proof} Throughout the proof we shall freely
identify~$H_n$ with~$C(S_{\ell_\infty^n})$ via the
map~\eqref{HnequalsCSlinftyn}.

\ref{PropStep1ofMainThm1}$\Rightarrow$\ref{PropStep1ofMainThm2}. 
Suppose that $\Phi_{\boldsymbol{x}}\colon C(S_{\ell_\infty^n})\to X$ is a vector
lattice homo\-mor\-phism which sa\-tis\-fies~\eqref{defnphcfcEq}.
Then,
applying~$\Phi_{\boldsymbol{x}}$ to the identity
\begin{equation}\label{EqMaxofpiis}
\bigabs{\pi_1^{(n)}\!\!\upharpoonright_{S_{\ell_\infty^n}}\!}\vee\cdots\vee
\bigabs{\pi_n^{(n)}\!\!\upharpoonright_{S_{\ell_\infty^n}}\!} =
\boldsymbol{1}, \end{equation} we obtain
\[ \Phi_{\boldsymbol{x}}(\boldsymbol{1}) = 
\bigabs{\Phi_{\boldsymbol{x}}(\pi_1^{(n)}\!\!\upharpoonright_{S_{\ell_\infty^n}}\!)}\vee\cdots\vee
\bigabs{\Phi_{\boldsymbol{x}}(\pi_n^{(n)}\!\!\upharpoonright_{S_{\ell_\infty^n}}\!)}
= \abs{x_1}\vee\cdots\vee \abs{x_n} = e. \] Hence Lemma~\ref{phcfcImpliesFLC}
implies that $\Phi_{\boldsymbol{x}}[C(S_{\ell_\infty^n})]\subseteq
I_e$ and the restriction of the norm~$\norm{\,\cdot\,}_e$
to~$\Phi_{\boldsymbol{x}}[C(S_{\ell_\infty^n})]$ is complete, and
therefore~\ref{PropStep1ofMainThm2} is satisfied because
$x_1,\ldots,x_n\in \Phi_{\boldsymbol{x}}[C(S_{\ell_\infty^n})]$.

\ref{PropStep1ofMainThm2}$\Rightarrow$\ref{PropStep1ofMainThm3}. This is clear.

\ref{PropStep1ofMainThm3}$\Rightarrow$\ref{PropStep1ofMainThm1}.
Suppose that $e\in X^+$ satisfies $e\ge
\abs{x_1}\vee\cdots\vee\abs{x_n}$ and the
norm~$\norm{\,\cdot\,}_e$ is complete on the closed
sub\-lattice~$X_{\boldsymbol{x}}$ of~$I_e$ generated
by~$x_1,\ldots,x_n$. Then 
$\bigl(X_{\boldsymbol{x}},\norm{\,\cdot\,}_e\bigr)$ is a Banach
lattice containing~$x_1,\ldots,x_n$, so the Yudin/Krivine Theorem as
it is stated in \cite[Theorem~1.d.1]{LT2} implies that there is a
vector lattice homomorphism $\Phi_{\boldsymbol{x}}\colon H_n\to
X_{\boldsymbol{x}}$ which satisfies~\eqref{defnphcfcEq}. This remains
true if we consider~$\Phi_{\boldsymbol{x}}$ as a map into the larger
codomain~$X$, so that~\ref{PropStep1ofMainThm1} is satisfied.

To prove the final clause, suppose that
\ref{PropStep1ofMainThm1}--\ref{PropStep1ofMainThm3} hold, and let
$\Phi_{\boldsymbol{x}}\colon C(S_{\ell_\infty^n})\to X$ be a vector
lattice homo\-mor\-phism which satisfies~\eqref{defnphcfcEq}.
Then~$\Phi_{\boldsymbol{x}}$ is uniquely determined on the
sub\-lattice of~$C(S_{\ell_\infty^n})$ generated by
\mbox{$\pi_1^{(n)}\!\!\upharpoonright_{S_{\ell_\infty^n}},\dots,
  \pi_n^{(n)}\!\!\upharpoonright_{S_{\ell_\infty^n}}$}.  The
Stone--Weier\-strass Theorem implies that this sub\-lattice is dense
in $C(S_{\ell_\infty^n})$ because it separates the points
of~$S_{\ell_\infty^n}$ and con\-tains~$\boldsymbol{1}$ by~\eqref{EqMaxofpiis}.
As in the proof of
\ref{PropStep1ofMainThm1}$\Rightarrow$\ref{PropStep1ofMainThm2} above,
we see that Lemma~\ref{phcfcImpliesFLC} applies; it shows
that~$\Phi_{\boldsymbol{x}}$ is continuous when regarded as an
operator into~$\bigl(I_e,\norm{\,\cdot\,}_e\bigr)$,
 and therefore~$\Phi_{\boldsymbol{x}}$ is uniquely
determined on all of~$C(S_{\ell_\infty^n})$.
\end{proof}

\begin{proof}[Proof of Theorem~{\normalfont{\ref{mainthm}}}]
The equivalence of conditions~\ref{PropStep1ofMainThm1}
and~\ref{PropStep1ofMainThm3} in Proposition~\ref{PropStep1ofMainThm}
implies immediately that~$X$ admits a positively homogeneous
continuous function calculus if and only if it is finitely uniformly
complete.

To prove~\eqref{compositionsEq}, we begin by remarking that for
$m,n\in\N$ and $f_1,\ldots,f_m\in H_n$, the function
$f_1\times\cdots\times f_m$ given by~\eqref{f1timesdotsfmDefn} is
continuous and the composition $g\circ(f_1\times\cdots\times f_m)$ is
positively homogeneous for each $g\in H_m$. Hence, for
$\boldsymbol{x}\in X^n$, we have a map
\[ g\mapsto \Phi_{\boldsymbol{x}}\bigl(g\circ(f_1\times\cdots\times
f_m)\bigr),\quad H_m\to X. \] This map is a vector lattice
homomorphism which maps $\pi_i^{(m)}$ to $\Phi_{\boldsymbol{x}}(f_i)$
for each $i\in\{1,\ldots,m\}$ because
$\pi_i^{(m)}\circ(f_1\times\cdots\times f_m) = f_i$, and it is
therefore equal
to~$\Phi_{(\Phi_{\boldsymbol{x}}(f_1),\ldots,\Phi_{\boldsymbol{x}}(f_m))}$
by the uniqueness statement in the last clause of
Proposition~\ref{PropStep1ofMainThm}.
\end{proof}

We shall next prove Theorem~\ref{mainthm2}. This will involve the following easy and undoubtedly well-known lemma.

\begin{lemma}\label{sublatticeclosure}
Let $Y$ be a sub\-lattice of a normed vector lattice~$X$, and suppose
that \[ \overline{Y}\cap X^+\subseteq Y. \] Then~$Y$ is closed.   
\end{lemma}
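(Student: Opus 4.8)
The plan is to prove the non-trivial inclusion $\overline{Y}\subseteq Y$; the reverse inclusion is automatic. The key idea is to reduce an arbitrary limit point to its positive and negative parts, which are positive and hence covered by the hypothesis.

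First I would take $x\in\overline{Y}$ and choose a sequence $(y_k)$ in~$Y$ with $y_k\to x$ in norm. Decompose $x = x^+ - x^-$, where $x^+ = x\vee 0$ and $x^- = (-x)\vee 0$. Since~$Y$ is a sub\-lattice it is in particular a linear subspace, so $0\in Y$, and therefore $y_k^+ = y_k\vee 0\in Y$ and $y_k^- = (-y_k)\vee 0\in Y$ for each~$k$.

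Next I would invoke the continuity of the lattice operations in a normed vector lattice: because $\norm{\,\cdot\,}$ is a lattice norm and $\bigabs{a^+ - b^+}\le\abs{a-b}$, one has $\bignorm{a^+ - b^+}\le\norm{a-b}$, so the map $a\mapsto a^+$ is (non-expansive, hence) continuous. It follows that $y_k^+\to x^+$ and $y_k^-\to x^-$. Consequently $x^+,x^-\in\overline{Y}$; and since both are positive, $x^+,x^-\in\overline{Y}\cap X^+\subseteq Y$ by hypothesis. As~$Y$ is a linear subspace, we conclude $x = x^+ - x^-\in Y$, completing the proof.

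There is no serious obstacle here: the one ingredient that does the work is the continuity of the lattice operations, which is exactly what lets the hypothesis on positive elements propagate to arbitrary elements via the decomposition $x = x^+ - x^-$.
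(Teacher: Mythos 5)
Your proof is correct and follows essentially the same route as the paper: both decompose a limit point $x\in\overline{Y}$ as $x^+-x^-$, observe that $x^\pm\in\overline{Y}\cap X^+\subseteq Y$, and conclude by linearity. The only difference is that the paper simply cites the standard fact that~$\overline{Y}$ is a sublattice, whereas you prove the needed fragment of it explicitly via the non-expansiveness of $a\mapsto a^+$, which is a fine (if slightly more detailed) way to justify the same step.
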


\begin{proof}
Let $y\in\overline{Y}$. Since~$\overline{Y}$ is a sub\-lattice,
$y^\pm\in\overline{Y}$, and they are both positive by their
definitions. Hence $y^\pm\in Y$ by the assumption, and therefore $y =
y^+-y^-\in Y$.
\end{proof}

\begin{proof}[Proof of Theorem~{\normalfont{\ref{mainthm2}}}] 
  Throughout this proof, we shall write $f(\boldsymbol{x})$ instead of
  $\Phi_{\boldsymbol{x}}(f)$. In this notation, the linearity of
  $\Phi_{\boldsymbol{x}}$ translates into the statement
  \begin{equation}\label{linearityEq}
  (f+\lambda g)(\boldsymbol{x}) = f(\boldsymbol{x}) +\lambda\,
   g(\boldsymbol{x})\qquad (n\in\N,\, \boldsymbol{x}\in X^n,\,
    f,g\in H_n,\, \lambda\in\R),
  \end{equation}
  while~\eqref{defnphcfcEq} becomes
  \begin{equation}\label{eq1.1alt} \pi_i^{(n)}(x_1,\ldots,x_n) = x_i\qquad
  (n\in\N,\,i\in\{1,\ldots,n\},\,x_1,\ldots,x_n\in X), \end{equation} and~\eqref{compEqAlt} replaces~\eqref{compositionsEq}.

  The map $\sigma\colon (t_1,t_2)\mapsto t_1\vee t_2,\, \R^2\to \R$,
  belongs to~$H_2$, so that $\sigma(x_1,x_2)$ defines an element
  of~$X$ for each pair $x_1,x_2\in X$. Consequently, we may define a
  relation~$\le$ on~$X$ by
  \[ x_1\le x_2\quad \Longleftrightarrow\quad \sigma(x_1,x_2)=x_2\qquad\quad (x_1,x_2\in X). \] 
  Our first aim is to show that this relation is a partial order on~$X$.

  \textsc{Reflexivity.}  The fact that $\sigma(t,t)=t$ for every
  $t\in\R$ implies that
  $\sigma\circ\bigl(\pi^{(2)}_1\times\pi^{(2)}_1\bigr)=\pi^{(2)}_1$ in
  $H_2$. Hence, by~\eqref{compEqAlt} and~\eqref{eq1.1alt}, we obtain
  \begin{equation}\label{reflEq}
  \sigma(x,x)=x\qquad (x\in X), 
  \end{equation}
  which shows that $x\le x$ for each $x\in X$, as required.

  \textsc{Anti-symmetry.}  Since $\sigma(t_1,t_2)=\sigma(t_2,t_1)$ for
  every pair $t_1,t_2\in\R$, we have
  $\sigma=\sigma\circ(\pi^{(2)}_2\times\pi^{(2)}_1)$
  in~$H_2$. Combining this identity with~\eqref{compEqAlt}
  and~\eqref{eq1.1alt}, we see that
  \begin{equation}\label{symmofsigmaEq}
  \sigma(x_1,x_2)=\sigma\bigl(\pi^{(2)}_2(x_1,x_2),\pi^{(2)}_1(x_1,x_2)\bigr)
  =\sigma(x_2,x_1)\qquad (x_1,x_2\in X).
  \end{equation}
  Now suppose that $x_1,x_2\in X$ satisfy $x_1\le x_2$ and $x_2\le
  x_1$. Then $\sigma(x_1,x_2)=x_2$ and $\sigma(x_2,x_1)=x_1$, so that
  $x_1=x_2$ by~\eqref{symmofsigmaEq}, as required.

  \textsc{Transitivity.}  The associativity of~$\vee$ means that
  $\sigma\bigl(t_1,\sigma(t_2,t_3)\bigr)
  =\sigma\bigl(\sigma(t_1,t_2),t_3\bigr)$ for every
  $t_1,t_2,t_3\in\R$. Hence the identity 
  \[ \sigma\circ\Bigl(\pi^{(3)}_1\times
   \bigl(\sigma\circ(\pi^{(3)}_2\times\pi^{(3)}_3)\bigr)\Bigr) =
   \sigma\circ\Bigl(\bigl(\sigma\circ(\pi^{(3)}_1\times\pi^{(3)}_2)\bigr)
   \times\pi^{(3)}_3\Bigr) \] holds in~$H_3$, from  which we deduce that
  \begin{equation}\label{assoc}
    \sigma\bigl(x_1,\sigma(x_2,x_3)\bigr) =
    \sigma\bigl(\sigma(x_1,x_2),x_3\bigr)\qquad (x_1,x_2,x_3\in X)
  \end{equation}
  by~\eqref{compEqAlt} and~\eqref{eq1.1alt}.
  Now suppose that
  $x_1,x_2,x_3\in X$ satisfy $x_1\le x_2$ and $x_2\le x_3$. Then we
  have $\sigma(x_1,x_2)=x_2$ and $\sigma(x_2,x_3)=x_3$. Substituting
  these identities into~\eqref{assoc}, we obtain
  $\sigma(x_1,x_3)
    =\sigma(x_2,x_3)=x_3$, 
    which shows that $x_1\le x_3$, as required.\smallskip

  Having thus established that~$\le$ is a partial order, we shall next
  show that each pair $(x_1,x_2)$ of elements of~$X$ has a supremum
  with respect to~$\le$, and it is given by~$\sigma(x_1,x_2)$. To this end, we observe that~\eqref{assoc} and~\eqref{reflEq} imply that
  \[ \sigma\bigl(x_1,\sigma(x_1,x_2)\bigr) = \sigma\bigl(\sigma(x_1,x_1),x_2\big) = \sigma(x_1,x_2), \] 
  so that $x_1\le \sigma(x_1,x_2)$. A similar argument, using
  also~\eqref{symmofsigmaEq}, shows that $x_2\le\sigma(x_1,x_2)$, and
  therefore $\sigma(x_1,x_2)$ is an upper bound of the pair
  $(x_1,x_2)$.  To show that it is the \textsl{least} upper bound,
  suppose that $y\in X$ satisfies $x_1\le y$ and $x_2\le y$, so that
  $\sigma(x_1,y)=y=\sigma(x_2,y)$. Then we have $\sigma(x_1,x_2)\le y$
  because~\eqref{assoc} implies that
  \[ \sigma\bigl(\sigma(x_1,x_2),y\bigr) = \sigma\bigl(x_1,\sigma(x_2,y)\bigr)
    =\sigma(x_1,y)=y, \] as required.

  It remains to verify that $\le$ is positively homogeneous and
  translation-invariant.

  \textsc{Positive homogeneity.}  Suppose that $x_1,x_2\in X$ satisfy
  $x_1\le x_2$, and let $\lambda\in[0,\infty)$. The positive
    homogeneity of~$\sigma$ translates into the identity
$\lambda\sigma = \sigma\circ(\lambda\pi_1^{(2)}\times\lambda\pi_2^{(2)})$ in~$H_2$, and therefore we have 
\[ \sigma(\lambda x_1,\lambda x_2) = (\lambda\sigma)(x_1,x_2) = \lambda\,\sigma(x_1,x_2) = \lambda x_2 \] 
by~\eqref{compEqAlt}, \eqref{linearityEq} and~\eqref{eq1.1alt}. This
shows that $\lambda x_1\le\lambda x_2$.

\textsc{Translation invariance.} Since $\sigma(t_1+t_3,t_2+t_3) =
\sigma(t_1,t_2)+t_3$ for every $t_1,t_2,t_3\in\R$, we have 
\begin{equation}\label{translatinvEq1} 
\sigma\circ\bigl((\pi_1^{(3)}+\pi_3^{(3)})\times(\pi_2^{(3)}+\pi_3^{(3)})\bigr)
= \sigma\circ(\pi_1^{(3)}\times\pi_2^{(3)}) +
\pi_3^{(3)} \end{equation} in~$H_3$. Let $x_1,x_2,x_3\in X$, suppose
that $x_1\le x_2$, and set $\boldsymbol{x} = (x_1,x_2,x_3)$. Then we
have
\begin{alignat*}{2}
\sigma(x_1+x_3,x_2+x_3) &=
\bigl(\sigma\circ(\pi_1^{(3)}\times\pi_2^{(3)}) +
\pi_3^{(3)}\bigr)(\boldsymbol{x})\quad &&\text{by \eqref{compEqAlt},
  \eqref{linearityEq}, \eqref{eq1.1alt} and
  \eqref{translatinvEq1}}\\ &= \sigma(x_1,x_2) + x_3\quad &&\text{by
  \eqref{compEqAlt}, \eqref{linearityEq} and
  \eqref{eq1.1alt}}\\ &=x_2+x_3\quad &&\text{because}\ x_1\le x_2,
\end{alignat*}
so that $x_1+x_3\le x_2+x_3$, as required. 

This completes the proof that~$X$ is a vector lattice with respect to
the order~$\le$.\smallskip

To show that~$\Phi_{\boldsymbol{x}}$ is a vector lattice homomorphism
for every $n\in\N$ and $\boldsymbol{x}\in X^n$, take $f,g\in
H_n$. Then $f\vee g=\sigma\circ(f\times g)$ in $H_n$, so that $(f\vee
g)(\boldsymbol{x}) = \sigma(f(\boldsymbol{x}), g(\boldsymbol{x}))$
by~\eqref{compEqAlt}. Recalling the convention that
$\Phi_{\boldsymbol{x}}(h) = h(\boldsymbol{x})$ for $h\in H_n$ and that
the supremum in~$X$ is given by $x_1\vee x_2 =\sigma(x_1,x_2)$ for
$x_1,x_2\in X$, we see that this means that~$\Phi_{\boldsymbol{x}}$
preserves suprema.  Hence the conclusion follows
because~$\Phi_{\boldsymbol{x}}$ is linear by assumption.
\smallskip

We shall prove the forward implication ($\Rightarrow$) of the final
statement by contraposition. Suppose that $\ker\Phi_{\boldsymbol{x}}$
is not closed in~$H_n$ for some $n\in\N$ and $\boldsymbol{x}\in
X^n$. By Lemma~\ref{sublatticeclosure}, we can choose $f\in
H_n^+\cap\overline{\ker\Phi_{\boldsymbol{x}}}\setminus
\ker\Phi_{\boldsymbol{x}}$, which implies that
$\Phi_{\boldsymbol{x}}(f)\in X^+\setminus\{0\}$. For each $m\in\N$,
take $f_m\in \ker\Phi_{\boldsymbol{x}}$ such that $\norm{f -
f_m}_\infty\le 1/m$. Then we have $m(f-f_m)\le\boldsymbol{1}$, so
that
\[ m\Phi_{\boldsymbol{x}}(f) = \Phi_{\boldsymbol{x}}\bigl(m(f-f_m)\bigr)\le \Phi_{\boldsymbol{x}}(\boldsymbol{1})\qquad (m\in\N). \]
This shows that~$X$ is not Archimedean.

Conversely, suppose that $\ker\Phi_{\boldsymbol{x}}$ is closed
in~$H_n$ for each $n\in\N$ and $\boldsymbol{x}\in X^n$. Further,
suppose that $x_1,x_2\in X^+$ satisfy $mx_1\le x_2$ for each $m\in\N$,
and set \mbox{$\boldsymbol{x} = (x_1,x_2)\in X^2$}. The Fundamental
Isomorphism Theorem implies that the vector
lattices~$\Phi_{\boldsymbol{x}}[H_2]$ and
$H_2/\ker\Phi_{\boldsymbol{x}}$ are isomorphic. Since the order
ideal~$\ker \Phi_{\boldsymbol{x}}$ is closed by the assumption, the
quotient $H_2/\ker\Phi_{\boldsymbol{x}}$ is a Banach lattice and
thus Archimedean. Therefore~$\Phi_{\boldsymbol{x}}[H_2]$ is also
Ar\-chi\-me\-dean, from which we conclude that $x_1 = 0$ because
$x_1,x_2\in\Phi_{\boldsymbol{x}}[H_2]$.
\end{proof}

\section{Examples}\label{section2} 

\noindent The purpose of this section is to present three examples
that complement Theorems~\ref{mainthm} and~\ref{mainthm2}. The first
of these examples shows that there are finitely uniformly complete
Archimedean vector lattices which are not uniformly complete, while
the second and third explore the situation for non-Archi\-medean
vector lattices by demonstrating that some non-Archimedean vector
lattices admit a positively homogeneous continuous function calculus
satisfying~\eqref{compositionsEq}, whereas others do not admit any
positively homogeneous continuous function calculus.

Since the first two of these examples work in the generality of an
arbitrary infinite compact Hausdorff space, we have chosen to formally
label them `proposition' instead of `example', even though they serve
as illustrative examples in the context of this paper. Moreover, as
they share a common framework and have several parts of their proofs
in common, we shall state them as a single proposition rather than
two.

\begin{proposition}\label{prop:twoexamples}
Let~$K$ be a compact Hausdorff space of
infinite cardinality, and choose 
an accumulation point $s_0\in K$ of a countably infinite subset of~$K$. 
\begin{enumerate}[label={\normalfont{(\roman*)}}]
\item\label{sublatticeX}
The set
\begin{equation}\label{sublatticeX:Eq1}  X =
\bigl\{ f\in C(K)\midcolon f\!\!\upharpoonright_U\: \text{is constant
  for some neighbourhood}\ U\ \text{of}\ s_0\bigr\} \end{equation} is
a proper, dense sub\-lattice of~$C(K)$ and thus Archimedean, and
$X$ is finitely uniformly complete, but not uniformly complete.
\item\label{orderidealJ}
The set 
\[ J = \bigl\{ f\in C(K)\midcolon f\!\!\upharpoonright_U\, = 0\ \text{for
  some neighbourhood}\ U\ \text{of}\ s_0\bigr\} \] is an order ideal
of~$C(K);$ it is not closed, and the quotient $C(K)/J$ is a
non-Archi\-me\-dean vector lattice which admits a positively
homogeneous function calculus that satisfies~\eqref{compositionsEq}.
\end{enumerate}
\end{proposition}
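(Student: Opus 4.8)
The plan is to treat the two parts separately, building on the technology developed earlier. For part \ref{sublatticeX}, the first task is to verify that $X$ is a sublattice of $C(K)$: if $f,g\in X$ are each eventually constant near $s_0$, then on a common neighbourhood both are constant, so $f\vee g$ and $f\wedge g$ are constant there too, whence they lie in $X$. That $X$ is a linear subspace is similar. To see $X$ is proper, I would use the accumulation point $s_0$ to construct a continuous function that oscillates on every neighbourhood of $s_0$ (e.g.\ taking prescribed distinct values along a sequence converging to $s_0$), which cannot be eventually constant there. Density is the interesting direction: given $f\in C(K)$ and $\varepsilon>0$, I want to perturb $f$ to something constant near $s_0$. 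The natural move is to pick a small closed neighbourhood $V$ of $s_0$ on which $f$ varies by less than $\varepsilon$ (possible by continuity), then replace $f$ on $V$ by the constant $f(s_0)$, patching continuously across $\partial V$ using Urysohn's lemma; the result lies in $X$ and is within $\varepsilon$ of $f$ in $\norm{\,\cdot\,}_\infty$. Since $X$ is dense in $C(K)$ it is Archimedean (any sublattice of an Archimedean lattice is Archimedean, so density is not even needed here, but it will matter for the non-completeness claim).

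The heart of part \ref{sublatticeX} is the contrast between finite and full uniform completeness. For finite uniform completeness I would invoke the equivalence \ref{PropStep1ofMainThm2}$\Leftrightarrow$\ref{PropStep1ofMainThm3} of Proposition~\ref{PropStep1ofMainThm}: given $f_1,\dots,f_n\in X$, each is constant on some neighbourhood of $s_0$, hence on a common neighbourhood $U$; I then take $e=\abs{f_1}\vee\cdots\vee\abs{f_n}\in X$ and argue that the closed sublattice of $(I_e,\norm{\,\cdot\,}_e)$ generated by $f_1,\dots,f_n$ is already complete. The key observation is that the $\norm{\,\cdot\,}_e$-topology on the sublattice generated by functions that are all constant near $s_0$ should coincide with (or be controlled by) the sup-norm on a compact space where the behaviour near $s_0$ has been collapsed; concretely, evaluation at $s_0$ is forced, so limits stay eventually constant near $s_0$ and thus stay in $X$. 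To show $X$ is \emph{not} uniformly complete, I would produce a single $e\in X^+$ (for instance $e=\boldsymbol{1}$, so $I_e=C(K)$ with $\norm{\,\cdot\,}_e=\norm{\,\cdot\,}_\infty$) and exhibit a sequence in $X$ that is $\norm{\,\cdot\,}_\infty$-Cauchy but converges to a function in $C(K)\setminus X$ — precisely the oscillating function built for the properness argument, approximated by its eventually-constant truncations.

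For part \ref{orderidealJ}, I would first check that $J$ is an order ideal: it is clearly a linear subspace and a sublattice by the same common-neighbourhood argument, and if $\abs{g}\le\abs{f}$ with $f\in J$ vanishing on $U$, then $g$ vanishes on $U$ too, so $g\in J$. That $J$ is not closed follows because its closure contains all functions vanishing \emph{at} $s_0$ (approximate such a function by cutting it off to zero on shrinking neighbourhoods of $s_0$, using that $s_0$ is an accumulation point), yet $J$ itself contains only those vanishing on a whole neighbourhood; separating these requires a function vanishing at $s_0$ but not near it, again supplied by the oscillating construction. The quotient $C(K)/J$ carries the quotient vector lattice structure, and I would show it admits the function calculus by transporting Krivine's calculus on $C(K)$ through the quotient map: since $J$ is an order ideal, the quotient map $q\colon C(K)\to C(K)/J$ is a vector lattice homomorphism, and for $\boldsymbol{x}=(q(f_1),\dots,q(f_n))$ I would define $\Phi_{\boldsymbol{x}}=q\circ\Psi$, where $\Psi\colon H_n\to C(K)$ is the (unique) calculus homomorphism for $(f_1,\dots,f_n)$ guaranteed by Krivine's theorem; composition of homomorphisms is a homomorphism satisfying~\eqref{defnphcfcEq}, and the composition identity~\eqref{compositionsEq} is inherited from the one on $C(K)$. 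Finally, non-Archimedeanity is exactly the phenomenon flagged in the discussion after Theorem~\ref{mainthm2}: because $J$ is not closed, I can pick $f\in\overline{J}\cap C(K)^+\setminus J$, and then $q(f)>0$ in the quotient while the images of suitable multiples stay dominated, witnessing the failure of the Archimedean property.

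The main obstacle I anticipate is the finite-uniform-completeness claim in part \ref{sublatticeX}: one must pin down the $\norm{\,\cdot\,}_e$-completeness of the finitely generated closed sublattice rather than of all of $X$, and the subtlety is showing that a $\norm{\,\cdot\,}_e$-Cauchy sequence built from functions constant near $s_0$ has a limit that is \emph{still} constant near $s_0$ on a fixed common neighbourhood — the neighbourhood must not shrink to nothing in the limit. The resolution should be that $e$ and the generators are all constant on one fixed $U$, so the lattice operations and the norm $\norm{\,\cdot\,}_e$ only ever see the single value at $s_0$ on $U$, freezing the behaviour there and keeping the limit inside $X$; making this rigorous via Proposition~\ref{PropStep1ofMainThm} (exhibiting the homomorphism $\Phi_{\boldsymbol{x}}$ directly, or identifying the sublattice with a $C(L)$ for an appropriate compact quotient $L$ of $K$) is where the real work lies.
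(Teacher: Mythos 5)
Your overall architecture matches the paper's — Urysohn patching for density, $e=\boldsymbol{1}$ for the failure of uniform completeness, a \emph{fixed} common neighbourhood of $s_0$ to freeze behaviour for finite uniform completeness, and transport of the explicit $C(K)$-calculus through the quotient in part~\ref{orderidealJ} — but two steps are genuinely incomplete as written. The first concerns the ``oscillating'' function, on which you rely three times (properness of $X$, non-closedness of $J$, and the non-Archimedean property of $C(K)/J$). Your suggested construction, ``taking prescribed distinct values along a sequence converging to $s_0$'', fails in a general compact Hausdorff space: no sequence need converge to $s_0$ at all (for $K=\beta\N$ and $s_0\in\beta\N\setminus\N$, the point $s_0$ is an accumulation point of $\N$ but is the limit of no nontrivial sequence), and prescribing values on a countable set does not by itself produce a continuous function on $K$. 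The paper's device repairs this using only the accumulation-point hypothesis: enumerate the countable set as $(s_m)$, take Urysohn functions $h_m$ with $h_m(s_0)=0$ and $h_m(s_m)=1$, and set $h=\sum_{m=1}^\infty 2^{-m}h_m$; every neighbourhood $U$ of $s_0$ contains some $s_m$, and $h(s_m)\geqslant 2^{-m}>0=h(s_0)$, so $h$ is constant on no such $U$. (Also a small slip: for $e=\boldsymbol{1}$ the ideal in question is the order ideal of $X$, so $I_{\boldsymbol{1}}=X$, not $C(K)$; taken literally, $I_e=C(K)$ would make $\bigl(I_e,\norm{\,\cdot\,}_e\bigr)$ complete and invert your conclusion, though your next sentence shows you intend the correct statement.)

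The second gap is in part~\ref{orderidealJ}: the claim that \eqref{compositionsEq} is ``inherited from the one on $C(K)$'' conceals the only nontrivial verification. Your $\Phi_{\boldsymbol{x}}$ for $\boldsymbol{x}=(q(f_1),\ldots,q(f_n))$ is built from a \emph{choice} of representatives, so on the left-hand side of \eqref{compositionsEq} the calculus at the tuple $\bigl(q(k_1),\ldots,q(k_m)\bigr)$, where $k_i=\Psi(f_i)$, is evaluated using some representatives of these classes which need not be the $k_i$ themselves; without proving that tuples of representatives agreeing modulo $J$ yield calculi agreeing modulo $J$, the identity is not even well-posed for your definition, let alone inherited. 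This representative-independence is exactly what the paper proves via the explicit pointwise formula \eqref{phcfcforCKeq1}: representatives differing by elements of $J$ agree on a common neighbourhood $U$ of $s_0$, hence by \eqref{phcfcforCKeq1} the two outputs agree on $U$ and differ by an element of $J$, so their images under $q$ coincide. By contrast, your anticipated resolution of the finite-uniform-completeness subtlety is sound; note only that the paper takes a slicker route than your $e=\abs{f_1}\vee\cdots\vee\abs{f_n}$: it chooses $e=c\boldsymbol{1}$ with $c=\max_j\norm{f_j}_\infty$, so that $\norm{\,\cdot\,}_e$ is a multiple of the sup-norm, and observes that $Y=\{f\in C(K)\midcolon f\!\!\upharpoonright_U\ \text{is constant}\}$ is a closed sublattice of the Banach lattice $C(K)$ with $e,f_1,\ldots,f_n\in Y\subseteq X$, whence completeness of the generated closed sublattice is immediate — avoiding the comparison between $\norm{\,\cdot\,}_e$ and the sup-norm that your choice of $e$ (which may vanish somewhere on $K$) would force you to carry out.
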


\begin{proof}
Being infinite, $K$ contains a countably infinite subset~$S$, and the
compactness of~$K$ implies that~$S$ has an accumulation point~$s_0\in
K$.

\ref{sublatticeX}. It is straightforward to verify that~$X$ is a
sub\-lattice of~$C(K)$.

To show that $X\neq C(K)$, we begin by enumerating $S\setminus\{s_0\}$
as $\{s_m\midcolon m\in\N\}$, where $s_m\ne s_n$ for $m\ne n$. Then,
for each $m\in\N$, Urysohn's Lemma implies that there is a
continuous function $h_m\colon K\to [0,1]$ with $h_m(s_0) = 0$ and
$h_m(s_m) = 1$. Define 
\begin{equation}\label{defn:functionh}
h= \sum_{m=1}^\infty 2^{-m}h_m\in C(K)^+.
\end{equation}  For
each neighbourhood~$U$ of~$s_0$, we can choose $m\in\N$ such that
$s_m\in U$, and we have
\begin{equation}\label{eq:hsn} h(s_m)\geqslant \frac{1}{2^m}>0=h(s_0).
\end{equation}
This shows that $h\!\!\upharpoonright_U$ is not constant, so that
$h\notin X$.

Next, to establish the density of~$X$ in~$C(K)$, we shall prove that
for each $\varepsilon>0$ and $f\in C(K)$, we can find a
neighbourhood~$U$ of~$s_0$ and a function $g\in C(K)$ such that
\begin{equation}\label{eqn:fg}
  \norm{f-g}_\infty\leqslant\varepsilon\qquad\text{and}\qquad g(u) =
  f(s_0)\qquad (u\in U).
\end{equation}
Indeed, since~$f$ is continuous at~$s_0$, we can choose an open
neighbourhood~$V$ of~$s_0$ such that $\abs{f(v) -
f(s_0)}\leqslant\varepsilon$ for each $v\in V$. Being compact, $K$ is
regular, so there is an open neighbourhoood~$U$ of~$s_0$ with
$\overline{U}\subseteq V$. Urysohn's Lemma then produces a continuous
function $k\colon K\to[0,1]$ such that $k(u) = 1$ for each $u\in
\overline{U}$ and $k(t) = 0$ for each $t\in K\setminus V$. Set $g =
f(s_0)k + (1-k)f\in C(K)$. Clearly $g(u) = f(s_0)$ for each $u\in
\overline{U}$, and
\[ \bigabs{f(v)-g(v)} = 
\bigabs{f(v)-f(s_0)}\,\bigabs{k(v)}\leqslant \varepsilon\qquad
(v\in V)\qquad\text{and}\qquad g\!\!\upharpoonright_{K\setminus V} =
f\!\!\upharpoonright_{K\setminus V}, \] which implies
that~\eqref{eqn:fg} is satisfied.

Since~$X$ contains the constant functions, the order
ideal~$I_{\boldsymbol{1}}$ of~$X$ generated by the constant
function~$\boldsymbol{1}$ is equal to~$X$, and the associated
norm~$\norm{\,\cdot\,}_{\boldsymbol{1}}$ given by~\eqref{OrderIdealNorm}
is equal to the su\-pre\-mum norm~$\norm{\,\cdot\,}_\infty$. This implies
that $\bigl(I_{\boldsymbol{1}},\norm{\,\cdot\,}_{\boldsymbol{1}}\bigr)$ is not complete
because~$X$ is a proper, dense subset of~$C(K)$, and therefore~$X$ is
not uniformly complete.

To verify that~$X$ is finitely uniformly complete, let
$f_1,\ldots,f_n\in X$ for some $n\in\N$, and take a neighbourhood~$U$
of~$s_0$ such
that the restrictions $f_1\!\!\upharpoonright_U,\ldots,f_n\!\!\upharpoonright_U$ are
all constant. We may suppose that $f_1,\ldots,f_n$ are not all~$0$, so
that $c := \max_{1\le j\le n}\norm{f_j}_\infty>0$. Then $e :=
c\boldsymbol{1}$ satisfies $e\ge\abs{f_1}\vee\cdots\vee\abs{f_n}$ and
$\norm{g}_e = c^{-1}\norm{g}_\infty$ for each $g\in X$. Moreover, the set
  \[ Y =  \bigl\{ f\in C(K)\midcolon
  f\!\!\upharpoonright_U\ \text{is constant}\bigr\} \] is a closed
  sub\-lattice of~$C(K)$ such that $e,f_1,\ldots,f_n\in
  Y\subseteq X$. In particular, the norm~$\norm{\,\cdot\,}_e$ is
  complete on~$Y$ and therefore also on the closed sub\-lattice
  generated by $f_1,\ldots,f_n$. This proves that~$X$ is finitely
  uniformly complete.

\ref{orderidealJ}. It is easily checked that~$J$ is an order ideal
of~$C(K)$. We claim that its
closure is given by
\begin{equation}\label{eqn:closureofJ} \overline{J} = 
\bigl\{ f\in C(K)\midcolon f(s_0)=0\bigr\}. \end{equation} Indeed, the
set on the right-hand side is clearly a closed set
containing~$J$, and~\eqref{eqn:fg} implies that each~$f\in C(K)$ with
$f(s_0) = 0$ can be approximated arbitrarily well by elements of~$J$.
This proves~\eqref{eqn:closureofJ}, from which we deduce that~$J$ is
not closed because~\eqref{eq:hsn} shows that the function~$h$ given by~\eqref{defn:functionh}
belongs to~$\overline{J}\setminus J$.

Set $Z = C(K)/J$, and let $Q\colon C(K)\to Z$ be the quotient
homomorphism. To prove that~$Z$ is not Archimedean, set $g_n =
(h-\frac{1}{n}\boldsymbol{1})^+\in C(K)^+$ for each $n\in\N$, where~$h$ is
defined by~\eqref{defn:functionh}, as above.  We see that $g_n\in J$
because~$h$ is continuous with $h(s_0)=0$. Hence the inequality
$\boldsymbol{1}\ge n(h-g_n)$ implies that $Q(\boldsymbol{1})\ge nQ(h)$ for
each $n\in\N$. However, $Q(h)\ne 0$ because $h\notin J$, and
therefore~$Z$ is not Archimedean.

Being a Banach lattice, $C(K)$ has a positively homogeneous continuous
function calculus which satisfies~\eqref{compositionsEq}. More
precisely, in the particular case of a $C(K)$-space, the positively
homogeneous continuous function calculus takes the following explicit
form:
\begin{equation}\label{phcfcforCKeq1} \bigl(\Phi_{\boldsymbol{f}}(g)\bigr)(t) 
= g\bigl(f_1(t),\ldots,f_n(t)\bigr)\qquad (t\in K) \end{equation} for
$n\in\N$, $\boldsymbol{f} = (f_1,\ldots,f_n)\in C(K)^n$ and $g\in
H_n$, as it is easy to see.

Let $R\colon Z\to C(K)$ be a right inverse map of~$Q$. (We can
choose~$R$ to be linear if we wish, but not in general a vector
lattice homomorphism, of course.)  Then, for each $n\in\N$ and $\boldsymbol{z} =
(z_1,\ldots,z_n)\in Z^n$, we can define a vector lattice homomorphism
by
\begin{equation}\label{phcfcforCKeq2}
 \Phi_{\boldsymbol{z}} = Q\circ \Phi_{(Rz_1,\ldots,Rz_n)}\colon H_n\to Z. \end{equation}
 It satisfies~\eqref{defnphcfcEq} because 
\[ \Phi_{\boldsymbol{z}}(\pi_i^{(n)}) = Q\bigl(\Phi_{(Rz_1,\ldots,Rz_n)}(\pi_i^{(n)})\bigr) 
= Q(Rz_i) = z_i\qquad (i\in\{1,\ldots,n\}). \]

To verify that~$\Phi_{\boldsymbol{z}}$
satisfies~\eqref{compositionsEq}, let $m\in\N$, $f_1,\ldots,f_m\in
H_n$ and $g\in H_m$, and set \mbox{$k_i =
  \Phi_{(Rz_1,\ldots,Rz_n)}(f_i)\in C(K)$} for
$i\in\{1,\ldots,m\}$. Then, on the one hand, \eqref{phcfcforCKeq2}
implies that
\begin{equation}\label{3MayEq1}
 \Phi_{(\Phi_{\boldsymbol{z}}(f_1),\ldots,\Phi_{\boldsymbol{z}}(f_m))}(g)
 = Q\bigl(\Phi_{(RQk_1,\ldots,RQk_m)}(g)\bigr), \end{equation} while on the
other,
\begin{equation}\label{3MayEq2}
\Phi_{\boldsymbol{z}}\bigl(g\circ(f_1\times\cdots\times f_m)\bigr) =
Q\bigl(\Phi_{(k_1,\ldots,k_m)}(g)\bigr) \end{equation}
by~\eqref{phcfcforCKeq2} and~\eqref{compositionsEq}.  Since $RQk_i -
k_i\in \ker Q = J$ for each $i\in\{1,\ldots,m\}$, we can find a
neigh\-bour\-hood~$U$ of~$s_0$ such that
\[ RQk_i(u) = k_i(u)\qquad (u\in U,\,i\in\{1,\ldots,m\}). \] 
Hence, using~\eqref{phcfcforCKeq1} twice, we obtain
\begin{align*} \bigl(\Phi_{(RQk_1,\ldots,RQk_m)}(g)\bigr)(u) &= 
g\bigl(RQk_1(u),\ldots,RQk_m(u)\bigr)\\ &=
g\bigl(k_1(u),\ldots,k_m(u)\bigr) =
\bigl(\Phi_{(k_1,\ldots,k_m)}(g)\bigr)(u)\qquad (u\in U),
 \end{align*}
so that $\Phi_{(RQk_1,\ldots,RQk_m)}(g) -
\Phi_{(k_1,\ldots,k_m)}(g)\in J$.  Combining this
with~\eqref{3MayEq1}--\eqref{3MayEq2}, we conclude
that~\eqref{compositionsEq} holds.
\end{proof}

\begin{remark}
\begin{enumerate}[label={\normalfont{(\roman*)}}]
\item The reader who prefers a more concrete example may simply consider $K = [0,1]$ and $s_0=0$ in Proposition~\ref{prop:twoexamples}. 
\item The hypothesis that $s_0$ is the accumulation point of a \textsl{countably infinite} subset of~$K$ is necessary, as the example $K = [0,\omega_1]$ (the set of ordinals no greater than the first uncountable ordinal~$\omega_1$, endowed with the order topology) and $s_0 = \omega_1$ shows: it is a standard fact that  the set~$X$ given by~\eqref{sublatticeX:Eq1} is equal to~$C(K)$ in this case.  
\end{enumerate}
\end{remark}

Our final example involves the following notion. 
For a locally compact Hausdorff space~$\Omega$,
\begin{multline*} C_0(\Omega) = \bigl\{ f\in C(\Omega)\midcolon
  \text{the set}\ \{ t\in\Omega\,:\,
  \abs{f(t)}\geqslant\varepsilon\}\ \text{is compact for
    each}\ \varepsilon>0\bigr\} \end{multline*} is a Banach lattice
with respect to the pointwise defined operations and the supremum
norm~$\norm{\,\cdot\,}_\infty$.  The evaluation map at a
point $t_0\in\Omega$ is denoted by~$\varepsilon_{t_0}$, that is,
\[ \varepsilon_{t_0}\colon\ f\mapsto f(t_0),\quad C_0(\Omega)\to\R. \] 
We require the following result, which is probably
well-known. However, as we have been unable to find it in the
literature, we include a proof, which is similar to that of
\cite[Theorem~2.33]{Aliprantis:06}.

\begin{lemma}\label{latticehomsfromC0K}
  Let~$\Omega$ be a locally compact Hausdorff space. 
  A map $\varphi\colon
C_0(\Omega)\to\R$ is a vector lattice homomorphism if and only if $\varphi
= c\,\varepsilon_{t_0}$ for some $c\in[0,\infty)$ and $t_0\in\Omega$.
  \end{lemma}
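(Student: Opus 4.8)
The plan is to characterize vector lattice homomorphisms $\varphi\colon C_0(\Omega)\to\R$. The backward implication is routine: for fixed $t_0$ and $c\geqslant 0$, the evaluation $\varepsilon_{t_0}$ is manifestly linear and multiplicative on lattice operations (since $(f\vee g)(t_0) = f(t_0)\vee g(t_0)$ and similarly for $\wedge$), and scaling by a nonnegative constant preserves these properties; so $c\,\varepsilon_{t_0}$ is a vector lattice homomorphism. The substance lies in the forward implication, and I would structure it in three stages.

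First I would dispose of the trivial case $\varphi = 0$ (take $c=0$ and any $t_0\in\Omega$), so assume $\varphi\neq 0$. Since $\varphi$ is a nonzero vector lattice homomorphism, it is positive and the functional $\norm{\,\cdot\,}_\infty$-bounded; more usefully, its kernel $\ker\varphi$ is a proper order ideal of $C_0(\Omega)$ (it is a sublattice, being the kernel of a lattice homomorphism, and is solid because $\abs{f}\leqslant\abs{g}$ together with $\varphi(g)=0$ forces $\abs{\varphi(f)}=\varphi(\abs{f})\leqslant\varphi(\abs{g})=0$). The key step is to locate a single point $t_0$ at which every function in $\ker\varphi$ vanishes. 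The plan is to show that $Z := \bigcap_{f\in\ker\varphi}f^{-1}(\{0\})$ is nonempty. If instead $Z=\varnothing$, then for each $t\in\Omega$ there is $f_t\in\ker\varphi$ with $f_t(t)\neq0$, hence $\abs{f_t}(t)>0$; using local compactness one covers a suitable compact set (arising from the support behaviour of a fixed positive element not in the kernel) by finitely many sets on which some $\abs{f_{t_i}}$ is strictly positive, and then $g := \abs{f_{t_1}}\vee\cdots\vee\abs{f_{t_k}}\in\ker\varphi$ is strictly positive on that compact set. This will be engineered to contradict $\varphi\neq0$, because a fixed $0\leqslant u\in C_0(\Omega)$ with $\varphi(u)>0$ can then be dominated by a multiple of $g$ on the relevant region, forcing $\varphi(u)=0$.

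Once a point $t_0\in Z$ is secured, so that $f(t_0)=0$ for all $f\in\ker\varphi$, I would argue that $\varphi$ factors through $\varepsilon_{t_0}$. The inclusion $\ker\varepsilon_{t_0}\supseteq\ker\varphi$ holds by construction; combined with the fact that both are hyperplanes (each kernel has codimension one, since $\varphi$ and $\varepsilon_{t_0}$ are nonzero scalar functionals and $C_0(\Omega)$ contains functions not vanishing at $t_0$), we get $\ker\varphi=\ker\varepsilon_{t_0}$, whence $\varphi = c\,\varepsilon_{t_0}$ for a unique scalar $c\in\R$. Finally $c\geqslant0$ because $\varphi$ is positive: choosing $u\in C_0(\Omega)^+$ with $u(t_0)>0$ gives $c\,u(t_0)=\varphi(u)\geqslant0$.

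The main obstacle is the first stage — proving $Z\neq\varnothing$ — because $\Omega$ is only locally compact rather than compact, so the simple finite-subcover argument available for $C(K)$ must be adapted. The remedy is to anchor the covering argument to a compact set produced by the $C_0$ condition: fix $u\in C_0(\Omega)^+$ with $\varphi(u)=1$, work on the compact set $\{t: u(t)\geqslant\tfrac12\}$, and exploit that a lattice-homomorphism kernel that is strictly positive on this compact set would dominate a multiple of $u$ there (and $u$ is small off it), contradicting $\varphi(u)=1$. This is exactly the point where the proof diverges from the compact case and where the reference to \cite[Theorem~2.33]{Aliprantis:06} is only a partial guide.
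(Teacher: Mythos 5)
Your argument is correct in outline, but it takes a genuinely different route from the paper's. The paper restricts $\varphi$ to the dense sublattice $C_c(\Omega)$ of compactly supported functions, invokes the Riesz--Markov theorem to represent $\varphi$ as integration against an inner regular Borel measure~$\mu$, and then uses the lattice-homomorphism property (via two disjointly supported Urysohn functions) to show that $\supp\mu$ contains at most one point; when $\supp\mu=\{t_0\}$, one gets $\varphi = c\,\varepsilon_{t_0}$ on $C_c(\Omega)$ and hence on all of $C_0(\Omega)$ by density and continuity. You avoid measure theory entirely: you show that the common zero set $Z$ of $\ker\varphi$ is non-empty, then identify $\ker\varphi$ with $\ker\varepsilon_{t_0}$ by a hyperplane argument and deduce proportionality. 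This is the classical ideal-theoretic proof (kernels of real-valued lattice homomorphisms on $C_0(\Omega)$ are point kernels); it is more elementary in that it needs no representation theorem, at the price of the compactness bookkeeping you correctly identify as the crux. Note that both proofs rely on the norm-continuity of the positive functional~$\varphi$ --- the paper implicitly when extending from $C_c(\Omega)$ to $C_0(\Omega)$, you explicitly.

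One step of your Stage 1 needs tightening: working only at the single level set $K=\{t\in\Omega \midcolon u(t)\geq \tfrac12\}$ does not yet produce a contradiction. If $g\in\ker\varphi$ satisfies $g\geq\delta>0$ on~$K$, then the pointwise decomposition $u = (u-\tfrac12)^{+} + \min(u,\tfrac12)$ (both summands belong to $C_0(\Omega)$, even though the constant $\tfrac12$ does not), together with $(u-\tfrac12)^{+}\leq (\norm{u}_\infty/\delta)\,g$, yields only $\varphi(u)=\varphi\bigl(\min(u,\tfrac12)\bigr)$; since $\min(u,\tfrac12)\leq u$, this is perfectly consistent with $\varphi(u)=1$, and there is no order unit available to force the remainder to be small. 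The repair is to run your covering argument at \emph{every} level: for each $\varepsilon>0$ the set $\{t\in\Omega\midcolon u(t)\geq\varepsilon\}$ is compact by the definition of $C_0(\Omega)$, so the assumption $Z=\emptyset$ produces $g_\varepsilon\in\ker\varphi$ bounded below on it, whence $\varphi\bigl((u-\varepsilon)^{+}\bigr)=0$ and therefore $\varphi(u)=\varphi\bigl(\min(u,\varepsilon)\bigr)\leq\norm{\varphi}\,\varepsilon$; letting $\varepsilon\to 0$ and using the boundedness you already recorded gives $\varphi(u)=0$, the desired contradiction. With this adjustment the proof closes. (Two minor points: the finite cover needs only compactness of the level set, not local compactness; local compactness is what you need in Stage 2, to produce a function not vanishing at~$t_0$ and hence to know $\varepsilon_{t_0}\neq 0$.)
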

\begin{proof}  Each map of the
form~$c\,\varepsilon_{t_0}$, where $c\in[0,\infty)$ and
  $t_0\in\Omega$, is clearly a vector lattice homo\-mor\-phism.

  Conversely, suppose that $\varphi\colon C_0(\Omega)\to\R$ is a
  vector lattice homomorphism, and denote by~$C_c(\Omega)$ the
  norm-dense subspace of~$C_0(\Omega)$ consisting of all compactly
  supported functions.  The restriction of~$\varphi$ to~$C_c(\Omega)$
  is a positive linear functional, so the Riesz--Markov Theorem (see
  for instance \cite[p.~352]{Royden:88}) implies that
  \[ \varphi(f)=\int f\,\mathrm{d}\mu\qquad (f\in
  C_c(\Omega)) \] for some inner regular Borel measure~$\mu$
  on~$\Omega$. Let $\supp\mu$ be the support of~$\mu$, defined as in
  \cite[Exercise~24, pp.~351--352]{Royden:88}.  If
  $\supp\mu=\emptyset$, then $\varphi=0$, so we can take $c=0$ and any
  point~$t_0\in\Omega$. Otherwise the result will follow provided that
  we can show that $\supp\mu$ consists of a single point. Assume the
  contrary, so that~$\supp\mu$ contains two distinct points, say~$t_1$
  and~$t_2$. Since~$\Omega$ is locally compact and Haus\-dorff, there
  are disjoint open, relatively compact sub\-sets~$U_1$ and~$U_2$
  of~$\Omega$ such that $t_i\in U_i$ for $i\in\{1,2\}$. By Urysohn's
  Lemma, we can find continuous functions $f_1,f_2\colon\Omega\to
  [0,1]$ such that $f_i(t_i)=1$ and $f_i$ vanishes on~$\Omega\setminus
  U_i$ for $i\in\{1,2\}$. Note that $f_1,f_2\in C_c(\Omega)$
  because~$U_1$ and~$U_2$ are relatively compact. Hence, on the one
  hand, we have
  \[ \varphi(f_1)\wedge\varphi(f_2) = \int f_1\,\mathrm{d}\mu\wedge \int f_2\,\mathrm{d}\mu >0 \]
  because $t_1,t_2\in\supp\mu$. On the other, $f_1\wedge f_2=0$
  implies that $\varphi(f_1)\wedge\varphi(f_2)=\varphi(f_1\wedge
  f_2)=0$, which is clearly absurd.
\end{proof}
  
\begin{example}\label{ex:R2lexHasNoPhcfc}
  Endow the vector space $X=\R^2$ with the lexicographic order:
  \begin{equation} \label{ex:R2lexHasNoPhcfcEq1} (s_1,s_2)\leqslant
  (t_1,t_2)\ \Longleftrightarrow\ (s_1<t_1)\ \text{or}\ (s_1 =
  t_1\ \text{and}\ s_2\leqslant t_2)\qquad (s_1,s_2,t_1,t_2\in\R), \end{equation}
  and  set
  \begin{equation}\label{ex:R2lexHasNoPhcfceq1}
    x_1 = (1,0)\in X^+,\qquad x_2 = (0,1)\in X^+\qquad\text{and}\qquad
    \boldsymbol{x} = (x_1,x_2)\in X^2. \end{equation} Then $X$ is a
  vector lattice which is not Archimedean because $nx_2\leqslant x_1$
  for each $n\in\N$.

  We claim that~$X$ does not admit any positively homogeneous
  continuous function calculus. More precisely, we shall show that for
  the particular choice of~$\boldsymbol{x}$ given
  by~\eqref{ex:R2lexHasNoPhcfceq1}, no vector lattice homomorphism
  $\Phi_{\boldsymbol{x}}\colon H_2\to X$ with
  $\Phi_{\boldsymbol{x}}(\pi_i^{(2)})=x_i$ for $i\in\{1,2\}$
  exists.

 Assume the contrary, and set $\varphi_i =
  \pi_i^{(2)}\circ\Phi_{\boldsymbol{x}}\colon H_2\to\R$ for
  $i\in\{1,2\}$. The de\-fi\-ni\-tion~\eqref{ex:R2lexHasNoPhcfcEq1} implies
  that the first coordinate projection $\pi_1^{(2)}\colon X\to\R$ is a
  vector lattice homomorphism, and consequently~$\varphi_1$ is a
  vector lattice homomorphism.  Recalling the
  identification~\eqref{HnequalsCSlinftyn} of~$H_2$ with
  $C(S_{\ell_\infty^2})$ and using
  Lemma~\ref{latticehomsfromC0K}, we deduce
  that $\varphi_1 = c_1\,\varepsilon_s$ for some $c_1\in[0,\infty)$
    and $s\in S_{\ell_\infty^2}$.  Equation~\eqref{EqMaxofpiis} in the
    case $n=2$ implies that
  \begin{align*} c_1 &=  \varphi_1(\boldsymbol{1}) = 
\pi_1^{(2)}\circ\Phi_{\boldsymbol{x}}\bigl(\abs{\pi_1^{(2)}}\vee\abs{\pi_2^{(2)}}\bigr)\\ &=
\pi_1^{(2)}\bigl(\bigabs{\Phi_{\boldsymbol{x}}(\pi_1^{(2)})}\vee
\bigabs{\Phi_{\boldsymbol{x}}(\pi_2^{(2)})}\bigr) =
\pi_1^{(2)}\bigl(\abs{x_1}\vee\abs{x_2}\bigr) = \pi_1^{(2)}(x_1) =
1. \end{align*} Now the calculation
\[ \pi_i^{(2)}(s) =
  \varphi_1 (\pi_i^{(2)}) =
  \pi_1^{(2)}\bigl(\Phi_{\boldsymbol{x}}(\pi_i^{(2)})\bigr) =
  \pi_1^{(2)}(x_i) = \delta_{i,1}\qquad (i\in\{1,2\}) \] shows that $s =
  (1,0) = x_1$, and consequently $\varphi_1 = \varepsilon_{x_1}$.

Although~$\pi_2^{(2)}$ is not a vector lattice homomorphism, the
restriction of~$\varphi_2$ to~$\ker\varphi_1$ is a vector lattice
homomorphism because
\begin{align*}
    \varphi_2(f\vee g) &=
    \pi_2^{(2)}\bigl(\Phi_{\boldsymbol{x}}(f)\vee\Phi_{\boldsymbol{x}}(g)\bigr)
    = \pi_2^{(2)}\bigl((0,\varphi_2(f))\vee(0,\varphi_2(g))\bigr) =
    \varphi_2(f)\vee\varphi_2(g) \end{align*} for every
$f,g\in\ker\varphi_1$.  Since we can identify \mbox{$\ker\varphi_1 =
  \{ f\in C(S_{\ell_\infty^2})\midcolon f(x_1)=0\}$} with
\mbox{$C_0\bigl(S_{\ell_\infty^2}\setminus\{x_1\}\bigr)$},
Lemma~\ref{latticehomsfromC0K} implies that
$\varphi_2\!\!\upharpoonright_{\ker\varphi_1} = c_2\,\varepsilon_t$
for some $c_2\in[0,\infty)$ and $t=(t_1,\,t_2)\in
  S_{\ell_\infty^2}\setminus\{x_1\}$.  We have
  $\pi_2^{(2)}\in\ker\varphi_1$ because $\pi_2^{(2)}(x_1) = 0$, and
  therefore
 \[  c_2t_2 = \varphi_2(\pi_2^{(2)}) = \pi_2^{(2)}\bigl(\Phi_{\boldsymbol{x}}(\pi_2^{(2)})\bigr) = \pi_2^{(2)}(x_2) = 1, \]   
so that $c_2,t_2>0$. Take $\lambda\in (t_1/t_2,\infty)$ and
define $f = \pi_1^{(2)}\vee (\lambda\pi_2^{(2)}) - \pi_1^{(2)}\in C(S_{\ell_\infty^2})$. Then on the one hand we have 
 \[
\varphi_2(f) = \pi_2^{(2)}\bigl(\Phi_{\boldsymbol{x}}(f)\bigr) = 
\pi_2^{(2)}\bigl(x_1\vee(\lambda x_2)-x_1\bigr) =
0,
\]
while on the other
$f\in\ker\varphi_1$ because  
$f(x_1) = 0$, 
and hence
 \[
\varphi_2(f) = c_2f(t) = c_2(t_1\vee(\lambda t_2)-t_1) = c_2(\lambda t_2-t_1)>0.  \] 
This is clearly absurd, and this
contradiction completes the proof of our claim that~$X$ does not admit a positively
homogeneous continuous function calculus.
\end{example}  

\subsection*{Acknowledgements} We are grateful to Professor Gerard Buskes (Mississippi) for discussing a preliminary version of this work with us.

\end{document}